\newtheorem{theorem}{Theorem}[section]
\newtheorem{lemma}[theorem]{Lemma}
\newtheorem{definition}[theorem]{Definition}
\newtheorem{remark}[theorem]{Remark}
\newcommand{\R}{\mathbbm{R}}		
\newcommand{\N}{\mathbbm{N}}		
\newcommand{\un}{u^{\nu}}		
\newcommand{\vn}{v^{\nu}}		
\newcommand{\vne}{v^{\nu,\epsilon{}}}		
\newcommand{\ue}{u^{E}}
\newcommand{\ve}{v^{E}}\newcommand{\ome}{{\omega}^{E}}
\DeclareMathOperator{\curl}{curl}
\begin{document}
\title{\bf Inviscid limits for the 3D~Navier-Stokes equations with slip
  boundary conditions and applications to the 3D~Boussinesq equations}
\author{Luigi C. Berselli\thanks{Universit\`a degli Studi di Pisa,
    Dipartimento di Matematica, Applicata ``U.~Dini'',
    Via~F.~Buonarroti 1/c, I-56127 Pisa, ITALY, email:
    berselli@dma.unipi.it, URL: http://users.dma.unipi.it/berselli,
    phone:+39 050 2213801, fax:+39 050 2213802}
  \and Stefano Spirito\thanks{Universit\`a dell'Aquila, Dipartimento
    di Matematica, via Vetoio~1, I-67010 Coppito (AQ), ITALY, email:
    stefano.spirito@dm.univaq.it} }

\maketitle
\begin{abstract}
  In this note we consider the inviscid limit for the 3D~Boussinesq
  equations without diffusion, under slip boundary conditions of
  Navier's type. We first study more closely the Navier-Stokes
  equations, to better understand the problem. The role of the initial
  data is also emphasized in connection with the vanishing viscosity
  limit.  

\vspace{.5cm}

\noindent\textbf{Keywords:} Boussinesq equations, Navier-Stokes equations, Vanishing
viscosity limits.
\\
\noindent\textbf{2000 MSC:} 35Q30, 35B25, 35B30, 76D05
\end{abstract}
\section{Introduction}
The aim of this note is to study the $L^\infty(0,T;L^2(\Omega))$
convergence, as $\nu$ vanishes, of (Leray-Hopf) weak solutions to the
3D~Navier-Stokes equations (NSE), towards smooth solutions of the
3D~Euler equations. The dependence of the rate of convergence in terms
of different hypotheses on the initial data is studied and the results
are also applied to handle the problem of convergence of solution
of the 3D Boussinesq equations to those of the Euler-Boussinesq. 

We first study the problem with constant density (set for simplicity
equal to one) and then, in the final section, we treat the Boussinesq
equations. In particular, we start by considering the inviscid limit
for the NSE in a bounded domain $\Omega\subset\R^3$ with smooth
boundary $\Gamma:=\partial\Omega\not=\emptyset$.\par
For the reader's convenience we recall that when $\Gamma$ is non
empty,  for the NSE with Dirichlet boundary conditions (and with $\nu>0$)
\begin{equation*}
  \begin{aligned}
    \partial_{t}\un-\nu\Delta\un+(\un\cdot\nabla)\,\un+\nabla
    p^\nu&=0&\qquad\textrm{ in }\Omega\times(0,T],
    \\
    \nabla\cdot\un&=0&\qquad\textrm{ in }\Omega\times(0,T],
    \\
    \un&=0&\qquad\textrm{ on }\Gamma\times(0,T],
    \\
    \un(0,x)&=u_{0}^\nu&\qquad \textrm{ in }\Omega,
  \end{aligned}
\end{equation*} 
in general one cannot have convergence (even in weak norms) towards
smooth solutions of the Euler equations, even with the same initial
data $u_0^\nu=u_0^E$
\begin{equation}
  \label{eq:Euler}
  \begin{aligned}
    \partial_{t}\ue+(\ue\cdot\nabla)\, \ue+\nabla
    p^{E}&=0&\qquad\textrm{ in }\Omega\times(0,T],
    \\
    \nabla\cdot \ue&=0&\qquad\textrm{ in }\Omega\times(0,T],
    \\
    \ue\cdot n&=0&\qquad\textrm{ on }\Gamma\times(0,T],
    \\
    \ue(0,x)&=u_{0}^E&\qquad\textrm{ in }\Omega,
  \end{aligned}
\end{equation}
see e.g. the review in Constantin~\cite{Con2007} and
Mazzucato~\cite{Maz2008}. In fact, even if both $\ue$ and $\un$ are
very smooth and both exist in $[0,T]$ (for some positive $T$
independent of the viscosity) certain extra-assumptions are needed in
order to show, at least, that
\begin{equation*}
\text{ as } \nu\to0^+\qquad   \un(t)\to \ue(t)\text{ in }L^2(\Omega),
  \quad\text{uniformly in $t\in[0,T]$}.
\end{equation*}
Some necessary and sufficient conditions, related with the dissipation
of energy in a boundary-strip of width depending on $\nu$, have been
detected by Kato~\cite{Kat1984a}. See also recent developments in
Temam and Wang~\cite{TW1997}, Wang~\cite{Wan2001}, and
Kelliher~\cite{Kel2007}. The lack of convergence is due to the
\textit{boundary layer} created from the difference between the
tangential velocity of the Navier-Stokes solution and that of the
Euler solution at the boundary: The first vanishes, while we do not
have control on the tangential velocity of the Euler equations. \par
Better results can be obtained in the case of the NSE with Navier's
boundary conditions.  In Iftimie and Planas~\cite{IP2006} it is
considered the following initial-boundary value problem
\begin{equation}
  \label{eq:NS-Navier-vero}
  \begin{aligned}
    \partial_{t}\un-\nu\Delta\un+(\un\cdot\nabla)\,\un+\nabla
    p^\nu=0&\qquad\textrm{ in }\Omega\times(0,T],
    \\
    \nabla\cdot\un=0&\qquad\textrm{ in }\Omega\times(0,T],
    \\
    \un\cdot n=0&\qquad\textrm{ on }\Gamma\times(0,T],
    \\
  [\mathcal{D}(\un)\,n+\beta\,\un]_{\text{tan}}=0&\qquad\textrm{ on
  }\Gamma\times(0,T], 
    \\
    \un(0,x)=u_{0}^\nu&\qquad \textrm{ in }\Omega,
  \end{aligned}
\end{equation} 
where $\mathcal{D}(\un) = \frac{1}{2} \big[\nabla\un + (\nabla
\un)^T\big] $ is the deformation tensor, $\beta\geq0$ is a constant
(the friction coefficient) and
$[\mathcal{D}(\un)\,n+\beta\,\un]_{\text{tan}}$ is the tangential
component of the vector $\mathcal{D}(\un)\,n+\beta\,\un$. This system
is very close to that originally proposed by Navier~\cite{Nav1823} and
studied analytically (in the stationary case) starting from Solonnikov
and {\v{S}}{\v{c}}adilov~\cite{SS1973}. In particular, the Navier's
slip conditions read as
$[\nu\mathcal{D}(\un)\,n+\beta\,\un]_{\text{tan}}=0$, hence
in~\cite{IP2006} the authors are implicitly assuming the Maxwell
scaling~\cite{Max1879}, with the friction parameter $\beta$ depending
linearly on the viscosity. More details on the role of Navier's
boundary conditions especially for numerical simulations, and some of
the crucial differences between the two dimensional and three
dimensional case, can be found in the review paper~\cite{Ber2010b}.
In the 2D setting the problem is slightly less-hard and classical
results employing slip boundary conditions are those of
Yudovich~\cite{Yud1963}, J.-L.~Lions~\cite{Lio1969}, and
Bardos~\cite{Bar1972}. Interesting results in the 2D case are those
in~\cite{LFMNL2008,LFNLP2005}.\par
A recent vanishing viscosity result in the 3D for
system~\eqref{eq:NS-Navier-vero} is the following one, (see Theorem~1
in~\cite{IP2006}).
\begin{theorem}
  Let $\Omega$ be a bounded smooth open set in $\R^3$ and let
  $u_0\in H^3(\Omega)$ a divergence-free vector field tangent to the
  boundary. For each $\nu>0$ consider $\un_0\in L^2(\Omega) $ a
  divergence free vector field tangent to the boundary such that
  $\un_0\to u_0$ strongly in $L^2(\Omega)$, as $\nu\to 0$. Let $\un$ be
  a weak solution of the Navier-Stokes
  equations~\eqref{eq:NS-Navier-vero} with Navier's boundary conditions
  and with initial datum $\un_0$. Let
  $\ue\in C([0,T];H^3(\Omega))$ be the unique solution of the Euler
  equations~\eqref{eq:Euler}, with initial datum $u_0$,  for some $T\in]0,T_{\max}[$, being
  $T_{\max}$ the maximal time of existence of the smooth solution of
  the Euler equations. Then, $\un$
  converges to $\ue$ strongly in $L^\infty(0,T;L^2(\Omega))$, as
  $\nu\to0$.
\end{theorem}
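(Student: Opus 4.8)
The plan is to estimate $\|w^\nu\|_{L^2}$ for $w^\nu:=\un-\ue$ by an energy method in the spirit of weak--strong uniqueness, keeping careful track of the boundary contributions. Since in 3D the field $\un$ is only a Leray--Hopf weak solution (no energy equality, no uniqueness), I would not test the $\un$-equation against $\un$ or against $w^\nu$; instead I would combine three ingredients: (i) the energy inequality for $\un$, which under Navier's conditions reads
\[
\tfrac12\|\un(t)\|_{L^2}^2+2\nu\int_0^t\|\mathcal D(\un)\|_{L^2}^2\,ds+2\nu\beta\int_0^t\|\un\|_{L^2(\Gamma)}^2\,ds\le\tfrac12\|\un_0\|_{L^2}^2;
\]
(ii) the distributional formulation of the $\un$-equation evaluated on the admissible test field $\ue$ (divergence-free, tangent to $\Gamma$, smooth in space-time); (iii) the classical (strong) $\ue$-equation tested against $\un$, together with the energy conservation $\|\ue(t)\|_{L^2}=\|u_0\|_{L^2}$ of the Euler flow. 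From (ii)+(iii) one obtains an identity for $\tfrac{d}{dt}\int_\Omega\un\cdot\ue\,dx$; adding it to (i) and using $\tfrac12\|w^\nu\|_{L^2}^2=\tfrac12\|\un\|_{L^2}^2-\int_\Omega\un\cdot\ue\,dx+\tfrac12\|\ue\|_{L^2}^2$, with the initial terms collapsing to $\tfrac12\|\un_0-u_0\|_{L^2}^2$, produces a Gr\"onwall-type inequality for $\tfrac12\|w^\nu(t)\|_{L^2}^2$.

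I expect the boundary terms to be the heart of the matter, and precisely the reason Navier's conditions succeed where Dirichlet data fail. Integrating $\nu\int_\Omega\Delta\un\cdot\ue\,dx$ by parts through $-\Delta=-2\,\nabla\!\cdot\mathcal D$ (legitimate since $\nabla\cdot\un=0$) produces $-2\nu\int_\Omega\mathcal D(\un):\mathcal D(\ue)\,dx+2\nu\int_\Gamma(\mathcal D(\un)\,n)\cdot\ue\,dS$; because $\ue$ is tangent to $\Gamma$ only the tangential part of $\mathcal D(\un)\,n$ survives, and the condition $[\mathcal D(\un)\,n+\beta\un]_{\text{tan}}=0$ rewrites that boundary integral as $-2\nu\beta\int_\Gamma\un\cdot\ue\,dS$. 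Pairing this against the favourable dissipative boundary term $-2\nu\beta\int_0^t\|\un\|_{L^2(\Gamma)}^2$ from (i) and completing the square leaves a remainder $\le\tfrac{\nu\beta}{2}\int_0^t\|\ue\|_{L^2(\Gamma)}^2\,ds\le C\nu T$ by the trace inequality and $\ue\in C([0,T];H^3)$; the same completion of the square on the interior viscous pair $-2\nu\int_0^t\|\mathcal D(\un)\|_{L^2}^2+2\nu\int_0^t\int_\Omega\mathcal D(\un):\mathcal D(\ue)\,dx\,ds$ contributes at most $\tfrac{\nu}{2}\int_0^t\|\mathcal D(\ue)\|_{L^2}^2\,ds\le C\nu T$. (With Dirichlet conditions the analogous boundary term is $\nu\int_\Gamma(\partial_n\un)\cdot\ue$, which has no sign and no smallness --- this is the Kato-type obstruction.)

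For the convective contributions I would integrate by parts using $\nabla\cdot\un=0$ and $\un\cdot n=0$ and then substitute $\un=w^\nu+\ue$; every term having $\ue$ in its last slot paired against $(a\cdot\nabla)\ue$ vanishes, since $\int_\Omega(a\cdot\nabla)\ue\cdot\ue\,dx=\tfrac12\int_\Omega(a\cdot\nabla)|\ue|^2\,dx=0$ for $a$ divergence-free and tangent to $\Gamma$, and the surviving contribution is, up to sign, $\int_\Omega(w^\nu\cdot\nabla)\ue\cdot w^\nu\,dx$, bounded by $\|\nabla\ue\|_{L^\infty}\|w^\nu\|_{L^2}^2\le C\|w^\nu\|_{L^2}^2$ since $H^3(\Omega)\hookrightarrow W^{1,\infty}(\Omega)$ in dimension three. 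Collecting everything yields, for $t\in[0,T]$,
\[
\tfrac12\|w^\nu(t)\|_{L^2}^2\le\tfrac12\|\un_0-u_0\|_{L^2}^2+C\nu T+C\int_0^t\|w^\nu(s)\|_{L^2}^2\,ds,
\]
with $C=C(T,\|\ue\|_{C([0,T];H^3)},\beta,\Omega)$, and Gr\"onwall gives $\sup_{[0,T]}\|w^\nu(t)\|_{L^2}^2\le\big(\|\un_0-u_0\|_{L^2}^2+2C\nu T\big)e^{2CT}$. This tends to $0$ as $\nu\to0$ because $\un_0\to u_0$ in $L^2(\Omega)$; in fact it already delivers the quantitative bound $\sup_{[0,T]}\|\un-\ue\|_{L^2}\le C\big(\|\un_0-u_0\|_{L^2}+\sqrt\nu\big)$. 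The one genuinely delicate point is making steps (ii)--(iii) rigorous: one must invoke the precise notion of Leray--Hopf solution under Navier's conditions to obtain, for test fields of the regularity of $\ue$, the integrated identity $\int_\Omega\un(t)\cdot\phi(t)\,dx-\int_\Omega\un_0\cdot\phi(0)\,dx=\int_0^t\int_\Omega\un\cdot\partial_t\phi\,dx\,ds+\cdots$ and to check that the pressure terms drop out; granting that definition, the rest is routine.
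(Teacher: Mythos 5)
Your proposal is correct and follows essentially the same weak--strong energy scheme the paper uses (combine the Navier--Stokes energy inequality, the weak formulation tested with $\ue$, the Euler equation tested with $\un$, and Euler energy conservation, then absorb the viscous and boundary cross terms and apply Gronwall); this is the argument of the cited Iftimie--Planas theorem, which the paper reproduces in Section~3 for its vorticity-based conditions. The only cosmetic difference is that you exploit the sign of the friction term $[\mathcal{D}(\un)\,n+\beta\,\un]_{\mathrm{tan}}=0$ to complete squares on $\Gamma$ (the natural route for system~\eqref{eq:NS-Navier-vero}), whereas the paper's displayed computations handle its curvature boundary term $\un\cdot(\nabla n)^T\cdot u$ via trace and interpolation inequalities; your quantitative bound $\sup_{[0,T]}\|\un-\ue\|^2=\mathcal{O}(\|\un_0-u_0\|^2+\nu)$ matches the paper's Remark~1.2.
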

\begin{remark}
  By inspecting the proof, one can observe that, if the initial data
  converge in $L^2(\Omega)$ fast enough, then the same argument
  implies that
  $\sup_{t\in[0,T]}\|\un(t)-\ue(t)\|^2=\mathcal{O}({\nu})$ and
  $\int_0^T\|\nabla\un(\tau)-\nabla\ue(\tau)\|^2\,d\tau=\mathcal{O}(1)$. The
  result here is independent of the  parameter $\beta\geq0$.
\end{remark}

Our aim is to study the convergence under some different
slip-without-friction boundary conditions, involving the vorticity.
More precisely we will study the following initial-boundary value
problem
\begin{equation}
  \label{eq:NS-Navier}
  \begin{aligned}
    \partial_{t}\un-\nu\Delta\un+(\un\cdot\nabla)\,\un+\nabla
    p^\nu&=0&\qquad\textrm{ in }\Omega\times(0,T],
    \\
    \nabla\cdot\un&=0&\qquad\textrm{ in }\Omega\times(0,T],
    \\
    \un\cdot n&=0&\qquad\textrm{ on }\Gamma\times(0,T],
    \\
    \curl\un\times n&=0&\qquad \textrm{ on }\Gamma\times(0,T],
    \\
    \un(0,x)&=u_{0}^\nu&\qquad \textrm{ in }\Omega.
\end{aligned}
\end{equation} 
and we will show how the convergence-rate can be improved.\par
The interest for these \textit{vorticity based} Navier's boundary
conditions is increasing, especially after the recent results by Xiao
and Xin~\cite{XX2007,XX2011}, Beir\~ao da Veiga and
Crispo~\cite{BeiC2009b,BeiC2010a,BeiC2010b,BeiC2009a}
and~\cite{BS2010b} concerning strong solutions and strong
convergence. See also the related work by
Xin~\textit{et~al.}~\cite{WWX2010,WX2009,XXW2009}.  Next, we point out
that the connection between the two Navier's type conditions,
\eqref{eq:NS-Navier-vero} \textit{versus} \eqref{eq:NS-Navier}, is
expressed by the following identity, valid for all tangential vectors
$\tau$ on the boundary $\Gamma$:
\begin{equation}
  \label{eq:vector_identity}
  {t}\cdot  {\tau}=\frac{\nu}{2}(\curl u\times  n)\cdot  {\tau}-
  \nu\, u\cdot  \frac{\partial n}{\partial {\tau}}\qquad \text{on }\Gamma.
\end{equation}
Here $t$ is the Cauchy stress vector defined by
\begin{equation*}
  {t}(u,p):=n\cdot\mathbbm{T}(u,p)=\sum_{k=1}^n \mathbbm{T}_{i k}(u,p)\,n_k, 
\end{equation*}
with $\mathbbm{T}(u,p):=-\mathrm{I}\,p+\nu\, \mathcal{D}(u)$.  The
vector identity~\eqref{eq:vector_identity} valid on $\Gamma$ shows
that the two Navier's conditions are essentially the same in the case
of a domain with flat boundary. Moreover, in a general domain they
differ by a lower order term.\par
%
%
Results similar to the present ones have been also recently obtained
by Xiao and Xin~\cite{XX2011}, while the results presented here are
part of the Ph.D. thesis of the second author, completed at the end of
2011.  We point out that the approach in~\cite{XX2011} is slightly
different, focusing more on $H^1(\Omega)$ convergence in the case of
``well-prepared'' initial datum $u_0^\nu=u_0^E\in H^3(\Omega)$ for
both the NSE and Euler equations. In addition, the boundary value
problem for the Navier-Stokes equations is a non-standard one,
slightly different from~\eqref{eq:NS-Navier}. On the other hand, the
aim of our result is to show how the initial data affect the
convergence of the vanishing viscosity even in the energy
norm. Moreover, we give a simple proof of the the convergence in the
case of well-prepared initial data. In particular, the main result of
this paper is Theorem~\ref{thm:2}, which shows improved convergence in
the $L^2$-norm and convergence also of first derivatives for the
system~\eqref{eq:NS-Navier}, when the initial datum has vorticity
vanishing at the boundary. Observe that in Theorem~\ref{thm:1} the
vanishing vorticity is requested only for the initial datum $u_0^E$ of
the Euler equations. The initial data $u_0^\nu$ of the NSE are just
divergence-free vector fields, tangential to the boundary, and
converging merely in $L^2(\Omega)$ (hence without any control on the
vorticity) to the datum of the Euler equations.

In the final section we use the results obtained for the NSE
equations to tackle the following problem: We study the convergence of
$(\vn,\rho^\nu)$, solution of the viscous Boussinesq equations
\begin{equation}
  \label{eq:NSB-Navier}
  \begin{aligned}
    \partial_{t}\vn-\nu\Delta\vn+(\vn\cdot\nabla)\,\vn+\nabla
    q^\nu&=-\rho^\nu e_3&\qquad\textrm{ in
    }\Omega\times(0,T],
    \\
    \partial_{t}\rho^\nu+(\vn\cdot\nabla)\,\rho^\nu&=0&\qquad\textrm{ in    }\Omega\times(0,T],
    \\
    \nabla\cdot\vn&=0&\qquad\textrm{ in }\Omega\times(0,T],
    \\
    \vn\cdot n&=0&\qquad\textrm{ on }\Gamma\times(0,T],
    \\
    \curl\vn\times n&=0&\qquad \textrm{ on }\Gamma\times(0,T],
    \\
    \vn(0,x)&=v_{0}^\nu&\qquad \textrm{ in }\Omega,
    \\
    \rho^\nu(0,x)&=\rho_{0}^\nu&\qquad \textrm{ in }\Omega,
\end{aligned}
\end{equation} 
toward those of the Euler-Boussinesq equations
\begin{equation}
  \label{eq:EulerB}
  \begin{aligned}
    \partial_{t}\ve+(\ve\cdot\nabla)\,\ve+\nabla q^E&=-\rho^E
    e_3&\qquad\textrm{ in }\Omega\times(0,T],
    \\
    \partial_{t}\rho^E+(\ve\cdot\nabla)\,\rho^E&=0&\qquad\textrm{ in
    }\Omega\times(0,T],
    \\
    \nabla\cdot\ve&=0&\qquad\textrm{ in }\Omega\times(0,T],
    \\
    \ve\cdot n&=0&\qquad\textrm{ on }\Gamma\times(0,T],
    \\
    \ve(0,x)&=v_{0}^E&\qquad \textrm{ in }\Omega,
    \\
    \rho^E(0,x)&=\rho_{0}^E&\qquad \textrm{ in }\Omega,
\end{aligned}
\end{equation} 
in the energy space $L^\infty(0,T;L^2(\Omega))$, where $e_3$ is the
third vector of the canonical basis in $\R^3$.

\medskip

\noindent\textbf{Plan of the paper} In Section~\ref{sec:preliminaries}
we briefly recall the notation, some vector identities, and the
existence results for the NSE~\eqref{eq:NS-Navier}. Next, in
Section~\ref{sec:convergence} we prove two different vanishing
viscosity results for the NSE, showing the critical dependence on the
initial datum. Finally, in Section~\ref{sec:Boussinesq} we study the
vanishing viscosity limit for the Boussinesq system.
\section{Preliminaries}
\label{sec:preliminaries}
We consider a bounded domain $\Omega\subset \R^3$ with smooth boundary
$\Gamma$, say of class $C^4$, and $n$ denotes the exterior normal unit
vector on $\Gamma$.  We will use the classical Lebesgue spaces
$(L^2(\Omega),\|\,.\,\|)$ and $(L^2(\Gamma),\|\,.\,\|_{\Gamma})$ and
the Sobolev spaces $(H^{k}(\Omega),\|\,.\,\|_{k})$ for $k\in\N$ (we do
not distinguish between scalar and vector valued functions). We will
denote by $(H^{s}(\Gamma),\|\,.\,\|_{s,\Gamma})$ the standard trace
spaces on the boundary $\Gamma$. We will also denote by $C$ generic
constants, which may change from line to line, but which are
independent of the viscosity and of the solution of the equations we
are considering.\\
\\
We first start by recalling  the precise notion of weak solution for the
NSE we will use.
\begin{definition}
\label{def:weak-solution}
We say that $\un\in L^{\infty}(0,T;L^{2}(\Omega))\cap
L^{2}(0,T;H^{1}(\Omega))$, weakly divergence-free and tangential to
the boundary, is a (Leray-Hopf) weak solution of the Navier-Stokes
equations~\eqref{eq:NS-Navier} if the two following conditions hold:
\begin{equation}
  \begin{aligned}
    \int_{0}^{T}\int_{\Omega}\big( -\un\phi_{t}+\nu\nabla \un\nabla
    \phi-(\un\cdot\nabla)\, \phi\,\un\big)\,dx d\tau 
    +\nu\int_{0}^{T}\int_{\Gamma}
    \un\cdot (\nabla n)^T\cdot \phi\,dS d\tau
    \\
=\int_{\Omega}u_{0}^\nu\phi(0)\,dx,
  \end{aligned}
\end{equation}
for all vector-fields $\phi\in
C^{\infty}_0([0,T[\times\overline{\Omega})$ such that
$\nabla\cdot\phi=0$ in $\Omega\times[0,T[$, and $\phi\cdot n=0$ on
$\Gamma\times[0,T[$; the following energy estimate
\begin{equation}
  \label{eq:Energy-NS-NS} 
\frac{1}{2}  \|\un(t)\|^{2}
  +\nu\int_{0}^{t}\|\nabla\un\|^{2}\,d\tau+
  \nu\int_{0}^{t}\int_{\Gamma}\un\cdot(\nabla 
  n)^T\cdot\un\,dS d\tau\leq\frac{1}{2}\|u_{0}^\nu\|^{2,}
\end{equation}
is satisfied for all $t\in[0,T]$.
\end{definition}

With this definition we have the following result.
\begin{theorem}
  \label{thm:existence_weak_solutions}
  Let be given any positive $T>0$ and $u_0^\nu\in L^2(\Omega)$ which
  is weakly divergence-free and such that $u_0^\nu\cdot n=0$ on $\Gamma$. Then,
  there exists at least a weak solution $\un$ of the Navier-Stokes
  equations~\eqref{eq:NS-Navier} on $[0,T]$.
\end{theorem}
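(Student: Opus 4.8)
The plan is to construct a weak solution by the Galerkin method, using as basis functions the eigenfunctions of the Stokes-type operator associated to the boundary conditions $\un\cdot n=0$ and $\curl\un\times n=0$. First I would fix the correct functional setting: let $H$ be the closure in $L^2(\Omega)$ of smooth divergence-free fields tangent to $\Gamma$, and let $V$ be the space of $H^1(\Omega)$ divergence-free fields tangent to $\Gamma$; the natural bilinear form is $a(u,v)=\nu\int_\Omega \nabla u\cdot\nabla v\,dx+\nu\int_\Gamma u\cdot(\nabla n)^T\cdot v\,dS$, which by a Korn/Poincaré-type inequality (using that $u\cdot n=0$ on $\Gamma$) is coercive on $V$ modulo a zero-order term, and whose associated self-adjoint operator $A$ has a complete orthonormal system of eigenfunctions $\{w_k\}\subset V$. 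These $w_k$ encode the slip condition in their weak formulation, so the boundary term in Definition~\ref{def:weak-solution} is built in automatically.

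Next I would set up the Galerkin ODE system: seek $\un_m(t)=\sum_{k=1}^m c_k^m(t)\,w_k$ solving the projected equation, with $\un_m(0)=P_m u_0^\nu$. Local existence of $c^m$ follows from the Cauchy–Lipschitz theorem since the nonlinearity is quadratic (hence locally Lipschitz) in the coefficients. The crucial a priori bound is obtained by testing with $\un_m$ itself: the convective term $\int_\Omega(\un_m\cdot\nabla)\un_m\cdot\un_m\,dx$ vanishes because $\un_m$ is divergence-free and tangent to $\Gamma$, the pressure drops out, and one is left precisely with
\begin{equation*}
\frac{1}{2}\frac{d}{dt}\|\un_m(t)\|^2+\nu\|\nabla\un_m\|^2+\nu\int_\Gamma \un_m\cdot(\nabla n)^T\cdot\un_m\,dS=0.
\end{equation*}
The boundary integral is bounded by $C\|\un_m\|\,\|\un_m\|_{1/2,\Gamma}\le \tfrac{\nu}{2}\|\nabla\un_m\|^2+C_\nu\|\un_m\|^2$ via the trace inequality and interpolation, so Grönwall gives a uniform (in $m$) bound for $\un_m$ in $L^\infty(0,T;H)\cap L^2(0,T;V)$, which also yields global existence of $c^m$ on $[0,T]$. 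From the equation one then gets a uniform bound on $\partial_t\un_m$ in $L^{4/3}(0,T;V')$ (or $L^2(0,T;V')$ in 2D-style estimates; in 3D the weaker exponent suffices), which is what is needed for compactness.

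The final step is passage to the limit: by Banach–Alaoglu and the Aubin–Lions lemma, up to a subsequence $\un_m\to\un$ weakly-$*$ in $L^\infty(0,T;H)$, weakly in $L^2(0,T;V)$, and strongly in $L^2(0,T;H)$, which is enough to pass to the limit in the quadratic term $\int_0^T\int_\Omega(\un_m\cdot\nabla)\phi\cdot\un_m$. The linear and boundary terms pass by weak convergence, the trace being continuous from $V$ to $L^2(\Gamma)$; the initial datum is recovered from the strong convergence $P_m u_0^\nu\to u_0^\nu$ in $H$. For the energy inequality~\eqref{eq:Energy-NS-NS} one uses weak lower semicontinuity of the norms $\|\un(t)\|$ and $\int_0^t\|\nabla\un\|^2$ against the (vanishing) right-hand side, handling the indefinite boundary term by absorbing it as above before taking the limit. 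I expect the main technical obstacle to be the boundary integral $\nu\int_\Gamma u\cdot(\nabla n)^T\cdot u\,dS$: it is not sign-definite, so it must be controlled carefully in the energy estimate (this is where $C^4$-regularity of $\Gamma$ and the trace/Korn inequalities enter), and one must check that the chosen Galerkin basis is compatible with this form so that the discrete energy identity reproduces exactly the boundary term appearing in the definition of weak solution.
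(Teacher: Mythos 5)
Your overall strategy --- Faedo--Galerkin with a basis adapted to the slip conditions, the energy estimate, Aubin--Lions compactness, and lower semicontinuity --- is exactly the route the paper relies on (the paper does not reprove existence but refers to \cite[\S~6]{XX2007} and only explains how to recast the energy inequality in the form~\eqref{eq:Energy-NS-NS}). However, you miss the structural simplification that makes the vorticity-based condition $\curl\un\times n=0$ pleasant to work with. Testing the Galerkin system with $\un_m$ and using the \emph{second} formula of Lemma~\ref{lem:gammalap}, the dissipative term is $\nu\int_0^t\|\curl\un_m\|^2\,d\tau$ and the boundary integral $\int_\Gamma(\curl\un_m\times n)\cdot\un_m\,dS$ vanishes identically by the boundary condition encoded in the basis; one gets the clean identity
\begin{equation*}
\tfrac12\|\un_m(t)\|^2+\nu\int_0^t\|\curl\un_m\|^2\,d\tau=\tfrac12\|u_{0m}^\nu\|^2 ,
\end{equation*}
with no indefinite term at all. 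The term $\nu\int_\Gamma u\cdot(\nabla n)^T\cdot u\,dS$ that you work to absorb only enters when $\|\curl\cdot\|$ is converted into $\|\nabla\cdot\|$ via the first formula of Lemma~\ref{lem:gammalap}; the equivalence of $\|\curl u\|$ with the full $H^1$-norm on divergence-free tangent fields is the Poincar\'e-type inequality of \cite{KY2009} quoted in the paper. Working with $\curl$ from the start removes the coercivity and sign-definiteness issues you flag as the main obstacle.

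There is also a concrete gap in your derivation of~\eqref{eq:Energy-NS-NS}: ``absorbing the indefinite boundary term before taking the limit'' does not yield the stated inequality. Absorption produces only a Gr\"onwall-weakened bound of the type $\|\un_m(t)\|^2+\nu\int_0^t\|\nabla\un_m\|^2\,d\tau\le e^{C\nu t}\|u_0^\nu\|^2$, in which the boundary term no longer appears on the left-hand side, whereas Definition~\ref{def:weak-solution} requires the inequality with $\nu\int_0^t\int_\Gamma\un\cdot(\nabla n)^T\cdot\un\,dS\,d\tau$ kept explicitly. To recover it along your route you must pass to the limit in this quadratic trace term itself, which requires strong convergence of the traces --- available from Aubin--Lions in $L^2(0,T;H^s(\Omega))$ for some $s\in(\tfrac12,1)$, but not from the weak convergences you invoke. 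The cleaner fix, and the paper's argument, is to pass to the limit in the $\curl$-form of the energy inequality by weak lower semicontinuity of $\int_0^t\|\curl\cdot\|^2$, and only then convert to~\eqref{eq:Energy-NS-NS} at the level of the limit function $\un\in L^2(0,T;H^1(\Omega))$ using Lemma~\ref{lem:gammalap}.
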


The proof of global existence of weak solutions in the sense of the
Definition~\ref{def:weak-solution} can be found
for instance in~\cite[\S~6]{XX2007}. We observe now that our definition of energy
inequality is slightly different from that in the above reference and
we explain now the equivalence.  To this end we recall the
following formulas for integration by parts (see Ref.~\cite{BB2009} for
the proof).
\begin{lemma}
  \label{lem:gammalap}
  Let $u$ and $\phi$ be two smooth enough vector fields, tangential to
  the boundary $\Gamma$.  Then it follows
  \begin{equation*}
      -\int_\Omega \Delta u\,\phi\,dx=\int_\Omega \nabla u\nabla
      \phi\,dx-\int_{\Gamma} (\omega\times
      n)\,\phi\,dS+\int_{\Gamma}u\cdot(\nabla n)^T\cdot\phi\,dS,
  \end{equation*}
 where $\omega=\curl u$. Moreover, if $\nabla\cdot u=0$, then $-\Delta
 u=\curl\curl u$, and   
      \begin{equation*}
\int_\Omega \curl \omega\,\phi\,dx=        -\int_\Omega \Delta u\,\phi\,dx=\int_\Omega \omega(\curl
        \phi)\,dx+\int_{\Gamma} (\omega\times  n)\,\phi\,dS.
    \end{equation*}
\end{lemma}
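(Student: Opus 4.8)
The plan is to obtain both identities from the classical Green and curl integration-by-parts formulas, reducing the whole matter to a pointwise computation on $\Gamma$. By density it suffices to argue with smooth fields, so the precise regularity plays no role.

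For the first identity I would start from the standard Green formula
\[
  -\int_\Omega \Delta u\,\phi\,dx=\int_\Omega \nabla u\nabla\phi\,dx-\int_\Gamma (\partial_n u)\cdot\phi\,dS,
\]
where $\partial_n u:=(\nabla u)\,n$, valid for $u\in H^2(\Omega)$ and $\phi\in H^1(\Omega)$. It then remains only to rewrite the boundary integrand on $\Gamma$. The elementary algebraic identity $\omega\times n=(\nabla u)\,n-(\nabla u)^T n$, with $\omega=\curl u$ (a direct index computation using $\varepsilon_{ijk}\varepsilon_{jpq}=\delta_{kp}\delta_{iq}-\delta_{kq}\delta_{ip}$), gives
\[
  (\partial_n u)\cdot\phi=(\omega\times n)\cdot\phi+\big((\nabla u)^T n\big)\cdot\phi ,
\]
and expanding $\big((\nabla u)^T n\big)\cdot\phi=\sum_{i,k}\phi_i\,\partial_i u_k\,n_k=(\phi\cdot\nabla)(u\cdot n)-u\cdot(\nabla n)^T\phi$, where $n$ is extended to a tubular neighbourhood of $\Gamma$ as the gradient of the signed distance function. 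The crucial point is that $u\cdot n\equiv0$ on $\Gamma$ while $\phi$ is tangential, so the tangential derivative $(\phi\cdot\nabla)(u\cdot n)$ vanishes on $\Gamma$; one is thus left with $(\partial_n u)\cdot\phi=(\omega\times n)\cdot\phi-u\cdot(\nabla n)^T\phi$ on $\Gamma$, and substituting this into the Green formula yields the first identity.

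For the second identity, when $\nabla\cdot u=0$ the vector identity $\nabla\times(\nabla\times u)=\nabla(\nabla\cdot u)-\Delta u$ gives $-\Delta u=\curl\curl u=\curl\omega$. The remaining formula is the classical integration-by-parts rule for the curl, obtained by applying the divergence theorem to the scalar field $\nabla\cdot(\omega\times\phi)=(\curl\omega)\cdot\phi-\omega\cdot(\curl\phi)$ and rewriting the resulting flux $\int_\Gamma(\omega\times\phi)\cdot n\,dS$ as a boundary integral in $\omega\times n$. Specialised to $\phi=u$ and combined with the boundary condition $\curl u\times n=0$, the first identity then accounts for the boundary term appearing in the energy inequality of Definition~\ref{def:weak-solution}.

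I expect the only genuinely delicate step to be the pointwise reduction on $\Gamma$ in the first identity: one must account correctly for the curvature of $\Gamma$ through the lower-order term $u\cdot(\nabla n)^T\phi$ and verify that the tangential-derivative term drops precisely because both $u$ and $\phi$ are tangent to the boundary. Everything else is routine vector calculus.
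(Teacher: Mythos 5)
Your treatment of the first identity is correct and complete: Green's formula, the pointwise algebraic identity $(\nabla u)\,n=\omega\times n+(\nabla u)^T n$, and the observation that $(\phi\cdot\nabla)(u\cdot n)$ is a tangential derivative of a function vanishing identically on $\Gamma$ together produce exactly the curvature term $u\cdot(\nabla n)^T\cdot\phi$ (the transpose ambiguity is harmless with the distance-function extension, since then $\nabla n$ is symmetric). Note that the paper does not prove the lemma at all but refers to \cite{BB2009}, so there is no internal proof to compare with; your route is the standard one.

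There is, however, a genuine issue in your second identity, located precisely at the step you dismiss as mere ``rewriting''. Carrying out your own computation, $\nabla\cdot(\omega\times\phi)=\phi\cdot\curl\omega-\omega\cdot\curl\phi$ and the divergence theorem give the flux $\int_\Gamma(\omega\times\phi)\cdot n\,dS$, and the triple-product rearrangement is $(\omega\times\phi)\cdot n=(n\times\omega)\cdot\phi=-(\omega\times n)\cdot\phi$. Hence what your argument actually proves is
\begin{equation*}
\int_\Omega \curl\omega\cdot\phi\,dx=\int_\Omega \omega\cdot\curl\phi\,dx-\int_{\Gamma}(\omega\times n)\cdot\phi\,dS,
\end{equation*}
with a minus sign, not the plus sign in the statement. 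The printed sign is in fact a slip: test $u=\phi=(-x_2,x_1,0)$ on the unit ball, where $\Delta u=0$, $\omega=(0,0,2)$, $n=x$, so $\omega\times n=2u$ and $u\cdot(\nabla n)^T\cdot u=|u|^2$ on $\Gamma$; the first identity and the minus-sign version both check ($0=\tfrac{8\pi}{3}-\tfrac{16\pi}{3}+\tfrac{8\pi}{3}$ and $0=\tfrac{16\pi}{3}-\tfrac{16\pi}{3}$), while the plus-sign version fails, and the two printed identities are mutually inconsistent on this example. So you should either prove and state the corrected sign, remarking that the discrepancy is immaterial for the paper's applications (the term vanishes when $\omega\times n=0$ on $\Gamma$, and elsewhere it is only estimated in absolute value), or accept that the proof as sketched does not establish the statement exactly as printed; silently rewriting the flux ``as a boundary integral in $\omega\times n$'' hides exactly the sign you were supposed to track.
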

Constructing weak solutions by the usual Galerkin method as
in~\cite{XX2007}, we get for the approximate solutions
$\{\un_m\}_{m\in\N}$ the following identity:
\begin{equation*}
  \frac{1}{2}  \int_{\Omega}|\un_m(t)|^{2}\,dx+\nu\int_{0}^{t}\int_{\Omega}|\curl\un_m|^{2}\,dx d\tau=
  \frac{1}{2}\int_{\Omega}|u_{0m}^\nu|^{2}\,dx\qquad t\in[0,T],
\end{equation*}
where we used as test function the function $\un_m$ itself and the
second integration by parts formula from Lemma~\ref{lem:gammalap} with
$u=\phi=\un_m$ (here the boundary integral vanishes due to the
vorticity-based Navier's boundary conditions). Then, the usual
compactness tools imply that $\un_m$ converge as $m\to+\infty$ to a
weak solution $\un$ and the lower-semi-continuity of the norm implies
that
\begin{equation*}
  \frac{1}{2}  \int_{\Omega}|\un(t)|^{2}\,dx+
  \nu\int_{0}^{t}\int_{\Omega}|\curl\un|^{2}\,dx d\tau
  \leq\frac{1}{2}\int_{\Omega}|u_{0}^\nu|^{2}\,dx,
\end{equation*}
which is the energy inequality~(6.12) in~\cite{XX2007}. Finally, by
using the first integration by parts formula from
Lemma~\ref{lem:gammalap} we get~\eqref{eq:Energy-NS-NS}.  This
inequality will be used later on to let some of the calculations
(which will be otherwise only formal) completely justified.\par
To conclude, we recall a well-known existence theorem for smooth
solutions of the Euler equations~\eqref{eq:Euler} in Sobolev spaces.
\begin{theorem}
  Let be given $u_0^E\in H^3(\Omega)$ such that $\nabla\cdot u_0^E=0$
  and $u_0^E\cdot n=0$ on $\Gamma$. Then, there exists a positive time
  $T=T(\|u_0^E\|_{3})>0$ such that a unique solution
  of~\eqref{eq:Euler} exists in
 \begin{equation*}
   u^E\in C([0,T];H^{3}(\Omega)).
 \end{equation*}
\end{theorem}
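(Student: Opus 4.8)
The statement is the classical local well-posedness of the 3D~Euler equations in a bounded domain under the slip condition $\ue\cdot n=0$, going back to Lichtenstein, Kato, Ebin--Marsden and Temam; I only sketch the structure of a self-contained energy-method proof. The plan is to set up a linear iteration. Given a divergence-free field $v\in C([0,T];H^3(\Omega))$ tangent to $\Gamma$, define $u=\Phi(v)$ as the solution of the linear problem
\[
  \partial_t u+(v\cdot\nabla)\,u+\nabla p=0,\qquad \nabla\cdot u=0,\qquad u\cdot n=0\ \text{on }\Gamma,\qquad u(0,\cdot)=u_0^E,
\]
i.e. a linear transport equation corrected by the Leray projection; the pressure solves $-\Delta p=\nabla v:(\nabla u)^T$ in $\Omega$ with Neumann datum $\partial_n p=-n\cdot(v\cdot\nabla)\,u$ on $\Gamma$. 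The point making the boundary terms harmless is that, since $u\cdot n=v\cdot n=0$ on $\Gamma$, one has $n\cdot(v\cdot\nabla)\,u=-u\cdot(v\cdot\nabla)\,n$, a quantity controlled by the second fundamental form of $\Gamma$ (this is where the $C^4$-regularity of $\Omega$ enters), hence by $\|u\|_1\|v\|_1$; standard elliptic regularity then gives $\|\nabla p\|_2\le C\|v\|_3\|u\|_3$.

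Next I would prove the a priori bound. Applying tangential derivatives up to order three to the equation, testing, and integrating by parts, the top-order terms split into commutators --- estimated by Moser/Kato--Ponce inequalities together with the 3D embedding $\|\nabla v\|_{\infty}\le C\|v\|_3$ --- and into boundary integrals controlled as above by the curvature of $\Gamma$; the missing normal derivatives are recovered from the div-curl elliptic system $\curl u=\omega$, $\nabla\cdot u=0$, $u\cdot n=0$, which yields $\|u\|_3\le C(\|\omega\|_2+\|u\|)$, and from the equation itself. Combining these with the pressure estimate gives
\[
  \frac{d}{dt}\|u(t)\|_3^2\le C\,\|v(t)\|_3\,\|u(t)\|_3^2 .
\]
Choosing $M:=2\|u_0^E\|_3$ and $T=T(\|u_0^E\|_3)>0$ so small that $\|u_0^E\|_3\,e^{CMT}\le M$, one sees that $\Phi$ maps the closed ball $B_M\subset C([0,T];H^3(\Omega))$ into itself; iterating from $u^{(0)}\equiv u_0^E$ produces a sequence $\{u^{(k)}\}\subset B_M$.

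For convergence I would estimate the difference $w^{(k)}:=u^{(k+1)}-u^{(k)}$, which solves a transport equation with forcing $-(w^{(k-1)}\cdot\nabla)\,u^{(k)}-\nabla(p^{(k+1)}-p^{(k)})$; an $L^2$ energy estimate (the pressure difference drops out after integrating by parts, using $\nabla\cdot w^{(k)}=0$ and $w^{(k)}\cdot n=0$) and Gronwall give $\sup_{[0,T]}\|w^{(k)}\|\le C\,T^{1/2}M\,\sup_{[0,T]}\|w^{(k-1)}\|$, so that, shrinking $T$ if necessary, $\{u^{(k)}\}$ is Cauchy in $C([0,T];L^2(\Omega))$. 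Its limit $\ue$ lies in $L^\infty(0,T;H^3(\Omega))\cap\mathrm{Lip}([0,T];H^2(\Omega))$, solves~\eqref{eq:Euler}, and the same $L^2$ argument applied to two solutions with the same datum yields uniqueness. The upgrade to $\ue\in C([0,T];H^3(\Omega))$ is the standard approximation argument of Bona--Smith type: one has $\ue\in C_w([0,T];H^3(\Omega))$ from the equation, and regularizing the initial datum one gets smoother solutions obeying the same a priori estimate and converging in $C([0,T];L^2(\Omega))$, which forces continuity of $t\mapsto\|\ue(t)\|_3$ and hence strong continuity.

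The main obstacle is precisely the top-order a priori estimate near $\Gamma$: one cannot apply $\partial^3$ freely because it destroys the slip condition, so the estimate must be organized through purely tangential derivatives plus elliptic (div-curl) recovery of the normal derivatives, and one must check that every boundary integral produced in the energy identity is either of lower order or is multiplied by a derivative of $n$ --- which is exactly where the $C^4$ assumption on $\Omega$ is used.
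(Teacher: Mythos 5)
The paper does not actually prove this theorem: it is quoted as a known result, with the proof attributed to Ebin--Marsden \cite{EM1970} and Temam \cite{Tem1975}, together with the quantitative remark $T\geq C/\|u_0^E\|_3$. So there is no internal argument to compare against; what you have written is a sketch of the standard ``analytic'' route (linear transport iteration, tame $H^3$ energy estimate organized through tangential derivatives plus div-curl recovery of normal derivatives, contraction in $C([0,T];L^2)$, Bona--Smith regularization for strong $H^3$-continuity), which is in the spirit of Temam and of Bourguignon--Brezis/Kato--Lai, and genuinely different from Ebin--Marsden's geometric proof via geodesics on the group of volume-preserving diffeomorphisms. Your treatment of the boundary is the right one: determining the Neumann datum for $p$ from $\partial_t(u\cdot n)=0$ and using that $n\cdot(v\cdot\nabla)u=-u\cdot(v\cdot\nabla)n$ on $\Gamma$ (tangentiality of $v$) reduces all boundary contributions to curvature terms, and your smallness condition $\|u_0^E\|_3 e^{CMT}\le M$ with $M=2\|u_0^E\|_3$ does reproduce the lower bound $T\geq C/\|u_0^E\|_3$ recorded in the paper.

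Two points deserve tightening if you were to write this out in full. First, for the $H^3$ energy estimate you need the pressure at top order, i.e.\ $\|\nabla p\|_3\le C\|v\|_3\|u\|_3$, not merely $\|\nabla p\|_2$ as you state; this is available (the source $\nabla v:(\nabla u)^T$ lies in the algebra $H^2(\Omega)$ and the Neumann datum $u\cdot(v\cdot\nabla)n$ lies in $H^{5/2}(\Gamma)$, so elliptic regularity gives $p\in H^4$), but as written the stated bound does not close the estimate unless you reorganize the integrations by parts to avoid third derivatives of $p$. Second, at sketch level you assert rather than prove solvability of the linear problem in $C([0,T];H^3)$ and the bookkeeping of the tangential-derivative/div-curl scheme (including the lower-order term $\|u\|$ accounting for harmonic fields in multiply connected domains); these are standard but are precisely where the length of the classical proofs lies, which is presumably why the paper simply cites \cite{EM1970,Tem1975} instead of reproducing them.
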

The proof in the case of a bounded domain can be found in Ebin and
Marsden~\cite{EM1970} and Temam~\cite{Tem1975}. In particular
$T\geq\frac{C}{\|u_0^E\|_3}$, for some $C>0$ independent of the
solution, and in the sequel $T$ will be any positive time strictly
smaller than the maximal time of existence $T_{\max}$.
\section{Proof of the convergence results}
\label{sec:convergence}
We start by showing the basic convergence result, which is the
counterpart of~\cite[Thm.~1]{IP2006} in our setting. \
\begin{theorem}
  \label{thm:1}
  Let $\Omega$ be a bounded smooth open set in $\R^3$, and let
  $u_0^E\in H^3(\Omega)$, be a divergence-free vector-field tangential
  to the boundary. Let $\ue\in C([0,T];H^3(\Omega))$ be the unique
  solution of the Euler equations~\eqref{eq:Euler}, with initial datum
  $u_0$ and defined in some interval $[0,T]$.  Let $\un$ be a weak
  solution of the NSE~\eqref{eq:NS-Navier} with
  divergence-free and tangential to the boundary initial datum $u_0^\nu\in
  L^2(\Omega)$, and with the vorticity-based Navier's
  conditions. Suppose also that
  \begin{equation*}
    \|u^\nu_0-u_0^E\|=\mathcal{O}(\nu^{\frac{3}{2}}).
  \end{equation*}
Then,
  $\sup_{t\in[0,T]}\|\un(t)-\ue(t)\|^2=\mathcal{O}({\nu}^{\frac{3}{2}})$
  and
  $\int_0^T\|\nabla\un(\tau)-\nabla\ue(\tau)\|^2\,d\tau=\mathcal{O}({\nu}^{\frac{1}{2}})$.
\end{theorem}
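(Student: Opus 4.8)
The plan is to perform a standard weak–strong energy estimate for the difference $w := \un - \ue$, exploiting the vorticity-based Navier conditions to make the viscous boundary terms manageable. First I would note that $\ue$ is smooth (belongs to $C([0,T];H^3(\Omega))$), so it is a legitimate competitor in the weak formulation of~\eqref{eq:NS-Navier}; I would test the NSE weak formulation against $\ue$, test the Euler equation against $\un$ (justified by the smoothness of $\ue$ and the regularity $\un\in L^2(0,T;H^1)$), and combine these with the energy inequality~\eqref{eq:Energy-NS-NS} for $\un$ and the (exact) energy identity for $\ue$. The cross terms coming from the nonlinearities assemble, after using $\nabla\cdot\un=\nabla\cdot\ue=0$ and $\un\cdot n=\ue\cdot n=0$ on $\Gamma$, into the single convective term $\int_\Omega (w\cdot\nabla)\ue\cdot w\,dx$, which is bounded by $\|\nabla\ue\|_{L^\infty}\|w(t)\|^2 \le C\|\ue\|_3\,\|w(t)\|^2$.

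Next I would treat the viscous contributions. The energy inequality gives $-\nu\int_0^t\|\nabla\un\|^2\,d\tau$ plus the boundary term $-\nu\int_0^t\int_\Gamma \un\cdot(\nabla n)^T\cdot\un\,dS\,d\tau$; the cross term from testing NSE against $\ue$ produces $-\nu\int_0^t\int_\Omega\nabla\un\nabla\ue\,dx\,d\tau$ together with a boundary term $-\nu\int_0^t\int_\Gamma \un\cdot(\nabla n)^T\cdot\ue\,dS\,d\tau$. Using Lemma~\ref{lem:gammalap} (the second identity, with $u=\phi=\un$, or more precisely rewriting $\nu\int\nabla\un\nabla\ue$ via $-\Delta\un=\curl\curl\un$ and the vorticity boundary condition $\curl\un\times n=0$), the awkward boundary integrals either vanish or collapse to terms of the form $\nu\int_0^t\int_\Gamma \un\cdot(\nabla n)^T\cdot(\un-\ue)\,dS$, controlled by trace inequalities together with $\nu\|\nabla\un\|_{L^2}^2$ absorbed on the left. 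One is left with a viscous dissipation term $-\tfrac{\nu}{2}\int_0^t\|\nabla w\|^2\,d\tau$ (up to sign and constants) on the good side, plus a forcing term $\nu\int_0^t\int_\Omega|\nabla\ue|^2\,dx\,d\tau = \mathcal{O}(\nu)$ coming from the smooth Euler solution — and crucially the hypothesis $\|u_0^\nu-u_0^E\|=\mathcal{O}(\nu^{3/2})$ feeds $\tfrac12\|w(0)\|^2=\mathcal{O}(\nu^3)$ into the right-hand side.

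Assembling everything, I would arrive at a differential (integral) inequality of Gronwall type,
\begin{equation*}
  \|w(t)\|^2 + \nu\int_0^t\|\nabla w\|^2\,d\tau
  \;\le\; C\|w(0)\|^2 + C\nu\int_0^t\|\nabla\ue\|^2\,d\tau
  + C\int_0^t\|\ue(\tau)\|_3\,\|w(\tau)\|^2\,d\tau,
\end{equation*}
and Gronwall's lemma then yields $\sup_{t\in[0,T]}\|w(t)\|^2 \le C(T)\big(\|w(0)\|^2 + \nu\big)$. Here the worst term on the right is genuinely $\mathcal{O}(\nu)$, not $\mathcal{O}(\nu^{3/2})$, coming from $\nu\int_0^T\|\nabla\ue\|^2\,d\tau$; so to obtain the stated rate $\mathcal{O}(\nu^{3/2})$ one must do better. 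The key refinement — and I expect this to be the main obstacle — is to \emph{not} bound $\nu\int_0^t\int_\Omega\nabla\un\nabla\ue\,dx$ by splitting off $\nu\|\nabla\ue\|^2$, but instead to integrate by parts to write $\nu\int_\Omega\nabla\un\nabla\ue\,dx = -\nu\int_\Omega \un\cdot\Delta\ue\,dx + (\text{boundary})$, so that this term becomes $\nu\int_0^t\int_\Omega(w+\ue)\cdot(-\Delta\ue)\,dx\,d\tau$; the $\ue\cdot(-\Delta\ue)$ piece integrates to $\nu\int_0^t\|\nabla\ue\|^2$ again and cancels against the Euler dissipation had we added $\nu\int\|\nabla\ue\|^2$ artificially, leaving only $\nu\int_0^t\int_\Omega w\cdot(-\Delta\ue)\,dx\,d\tau \le C\nu\int_0^t\|w\|\,d\tau \le C\nu\big(\int_0^t\|w\|^2\,d\tau\big)^{1/2}$. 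Feeding this into Gronwall and using $\|w(0)\|^2=\mathcal{O}(\nu^3)$, a bootstrap argument (assume $\sup\|w\|^2\le M\nu^{3/2}$, plug in, recover the same bound for $T$ small enough, or simply close the quadratic inequality $X \le C\nu^3 + C\nu\sqrt{T}\,X^{1/2} + C\int\|w\|^2$) gives $\sup_{t\in[0,T]}\|w(t)\|^2=\mathcal{O}(\nu^{3/2})$. The companion bound $\int_0^T\|\nabla w\|^2\,d\tau=\mathcal{O}(\nu^{1/2})$ then follows by dividing the retained dissipation term $\nu\int_0^T\|\nabla w\|^2$ by $\nu$. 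Throughout, the delicate point is the careful handling of the curvature boundary terms $\int_\Gamma \un\cdot(\nabla n)^T\cdot(\cdot)\,dS$ via trace estimates so that they are absorbable and do not degrade the rate.
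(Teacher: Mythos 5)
There is a genuine gap, and it sits exactly where the exponent $3/2$ comes from. Your overall skeleton (test the weak formulation of~\eqref{eq:NS-Navier} with $\ue$, test the Euler equations with $\un$, combine with the energy inequality~\eqref{eq:Energy-NS-NS} and the Euler energy identity, absorb the convective term via $\|\nabla\ue\|_{L^\infty}$, and in the refinement integrate the cross viscous term by parts so as to put the Laplacian on the smooth Euler solution) is the same as the paper's, up to~\eqref{eq:p5bis}. But when you integrate by parts $\nu\int_0^t\int_\Omega\nabla\ue\nabla u\,dx\,d\tau$ (Lemma~\ref{lem:gammalap} applied with the smooth field $\ue$ and the tangential test field $u=\un-\ue$), the boundary contribution is \emph{not} only a curvature term of the type $\nu\int_\Gamma \un\cdot(\nabla n)^T\cdot u\,dS$: it contains $\nu\int_0^t\int_\Gamma(\ome\times n)\cdot u\,dS\,d\tau$, the tangential trace of the \emph{Euler} vorticity. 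Under the hypotheses of Theorem~\ref{thm:1} there is no condition on $\omega_0^E$ at the boundary, so this term does not vanish (the paper discusses at length, after Theorem~\ref{thm:2}, that $\ome\times n=0$ on $\Gamma$ is generically false), and it is precisely this term that dictates the rate: by trace/interpolation, $\nu\bigl|\int_\Gamma(\ome\times n)\,u\,dS\bigr|\leq C\nu\|u\|^{1/2}\|\nabla u\|^{1/2}\leq C\nu^{3/2}+C\|u\|^2+\tfrac{\nu}{2}\|\nabla u\|^2$, which feeds the $\nu^{3/2}$ into the Gronwall inequality. Your proposal asserts instead that the awkward boundary integrals ``either vanish or collapse'' to curvature terms; if that were true you would be proving $\mathcal{O}(\nu^2)$, i.e.\ the conclusion of Theorem~\ref{thm:2}, without its hypothesis $\omega_0^E=0$ on $\Gamma$ --- a sign that a term has been lost. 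Correspondingly, your claim that the interior term $\nu\int_0^t\int_\Omega w\cdot\Delta\ue\,dx\,d\tau$ is the rate-limiting one is incorrect: by Young it contributes only $C(\nu^2+\int_0^t\|w\|^2)$, so no bootstrap or quadratic-inequality argument is needed there, and it cannot produce the exponent $3/2$.

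A secondary point: the alternative you sketch, rewriting $\nu\int\nabla\un\nabla\ue$ via $-\Delta\un=\curl\curl\un$ and then invoking $\curl\un\times n=0$ on $\Gamma$, is not available at this level of regularity. The weak solution only satisfies $\un\in L^2(0,T;H^1(\Omega))$, so $\curl\un$ has no boundary trace and the vorticity condition is encoded only weakly (through the boundary integral already present in Definition~\ref{def:weak-solution}). The paper avoids this by always integrating by parts onto the smooth Euler solution, which is why the surviving boundary vorticity term involves $\ome=\curl\ue$ and not $\curl\un$.
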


\begin{proof}
  The proof is simply obtained by taking the difference of the
  equation satisfied by $\un$ with that for $\ue$, multiplying by
  $u:=\un-\ue$, and integrating by parts over
  $\Omega\times(0,T)$. Unfortunately, this cannot be done in a so
  straightforward manner since $\un$ is a weak solution, hence using
  directly $u$ (having the regularity of $\un$) as test function is
  not allowed. We need to pass to an integral formulation and to use
  the energy inequality~\eqref{eq:Energy-NS-NS} to make the argument
  rigorous. The reader well-acquainted with the argument can go
  directly to the formula~\eqref{eq:p5bis}.\par
  We first observe that since $\ue$ is a smooth solutions of the Euler
  equations~\eqref{eq:Euler} in $[0,T]\times\Omega$, $u^E$ is allowed
  as test function for the NSE. Then, after certain integration by
  parts, we get for all $t\in[0,T]$
  \begin{equation}
    \label{eq:energy-NS-E}
    \begin{aligned}
      \int_{\Omega}\un(t)\ue(t)\,dx+ \int_{0}^{t}\int_{\Omega}\Big(\nu\nabla
      \un\nabla \ue - \un\ue_{t}+(\un\cdot\nabla)\, \un
      \ue\Big)\,dx d\tau
      \\
      +\nu\int_{0}^{t}\int_{\Gamma}\un(\nabla n)^T
      \ue\,dS d\tau=\int_{\Omega}u_{0}^E u_0^\nu\,dx.
    \end{aligned}
  \end{equation}
  A further identity is obtained by multiplying the Euler equations by
  $\un$. Since $\ue$ is a local smooth solution everything is
  well-defined and we get
  \begin{equation}
    \label{eq:energy-E-NS}
    \int_{0}^{t}\int_{\Omega}\big(\ue_t
    \un+(\ue\cdot\nabla)\, \ue \un\big)\,dx d\tau
    =\int_{\Omega}u_{0}^E u_0^\nu\,dx.
  \end{equation}
  Next, by multiplying the Euler equations by $\ue$ and by the usual integrations
  by parts we get the energy conservation
  \begin{equation}
    \label{eq:energy-E-E}
    \frac{1}{2}\int_{\Omega}|\ue(t)|^2\,dx=\frac{1}{2}\int_{\Omega}|u_{0}^E|^2\,dx,
  \end{equation}
  where we used the fact that $\ue$ is smooth, tangential to the
  boundary, and divergence-free.\par
Then, by adding together~\eqref{eq:Energy-NS-NS}-\eqref{eq:energy-E-E}
and subtracting~\eqref{eq:energy-NS-E}-\eqref{eq:energy-E-NS}, we
get
  \begin{equation} 
    \label{eq:p5} 
    \begin{aligned}
      \frac{\|u(t)\|^{2}}{2}&
      +\nu\int_{0}^{t}\int_{\Omega}\nabla\un\nabla u\,dx
      d\tau+\int_{0}^{t}\int_{\Omega}(u\cdot\nabla)\,\ue u\,dx d\tau
      \\
      &\qquad +\nu \int_{0}^{t}\int_{\Gamma}\un(\nabla n)^T u\,dS
      d\tau\leq \frac{\|u(0)\|^{2}}{2}.
     \end{aligned}
     \end{equation}
  Let us consider the second term from the left-hand side
  of~\eqref{eq:p5}. By using the parallelogram equality we get 
  \begin{equation*}
   \begin{aligned}
   \int_{0}^{t}\int_{\Omega}\nabla\un\nabla    u\,dx d\tau=
    \frac{1}{2}\int_{0}^{t}\int_{\Omega}|\nabla\un|^2\,dx d\tau
    &+\frac{1}{2}\int_{0}^{t}\int_{\Omega}|\nabla u|^{2}\,dx d\tau
    \\
    &-\frac{1}{2}\int_{0}^{t}\int_{\Omega}|\nabla \ue|^{2}\,dx d\tau. 
  \end{aligned}
  \end{equation*}
  Then, we estimate the other two terms from the left-hand side
  of~\eqref{eq:p5}. We handle the third term by using the higher-order
  regularity of $\ue$, namely we use $\ue\in
  C([0,T];H^3(\Omega))\hookrightarrow C([0,T];W^{1,\infty}(\Omega))$, to get
  \begin{equation*}
   \left|\int_{0}^{t}\int_{\Omega}(u\cdot\nabla)\,\ue\, u\,dx
      d\tau\right|\leq C\int_{0}^{t}\|u(t)\|^2\,d\tau,
  \end{equation*}
for some $C$ independent of $\nu$.
  Concerning the boundary term in~\eqref{eq:p5}, by using trace theorems
  and Young inequality we get, due to the smoothness of $\Gamma$,
  \begin{equation*}
 \begin{aligned}
   \nu   \left| \int_{0}^{t}\int_{\Gamma}\un(\nabla n)^T u\,dS
     d\tau\right|&\leq \nu\int_{\Gamma}|\un|\,|\nabla n|\,| u|\,dS
   \\
   &\leq C\,\nu \|\un\|_{\Gamma}\|u\|_{\Gamma}
   \\
   &\leq C\,\nu \|\un\|_{1/2}\|u\|_{1/2}
   \\
   &\leq
   C\,\nu\|\un\|^{\frac{1}{2}}\|\nabla\un\|^{\frac{1}{2}}\|u\|^{\frac{1}{2}}\|\nabla
   u\|^{\frac{1}{2}}
   \\
   &\leq \frac{\nu}{4}\|\nabla \un\|^{2}+\frac{\nu}{4}\|\nabla
   u\|^{2}+C\nu\|\un\|\|u\|.
  \end{aligned}
\end{equation*}
In particular, to handle the $H^{1/2}(\Gamma)$-norm and to remove the
zero-order term we have used the fact that for functions tangential to
the boundary the Poincar\'e inequality holds true (see for instance,
Kozono and Yanagisawa~\cite{KY2009}) and also that from the energy
inequality both $\|\un\|$ and $\|u^E\|$ are bounded.

By collecting all the estimates, from~\eqref{eq:p5} we get that
  \begin{equation}
    \label{eq:fundamental1}
    \|u(t)\|^{2}    +{\nu}\int_{0}^{t}\|\nabla
    u(\tau)\|^2\,d\tau\leq     \|u(0)\|^{2} +C\left[\int_{0}^{t}\|u(\tau)\|^2\,d\tau+\nu \right].
  \end{equation}
  We can now use Gronwall lemma, obtaining 
  \begin{equation*}
   \|\un-\ue\|^2_{L^{\infty}(0,T;L^{2}(\Omega))}=\mathcal{O}({\nu})\qquad
    \text{and}\qquad     \|\nabla\un-\nabla\ue\|^2_{L^2(0,T;L^{2}(\Omega))}=\mathcal{O}(1),
  \end{equation*}
  exactly as in~\cite{IP2006}. This is not the result stated in the
  Theorem~\ref{thm:1}, but the calculations are given to better
  understand the differences/improvement. 

\bigskip

  We make now some slightly different manipulations, in order to show
  the better rate-of-convergence stated in Theorem~\ref{thm:1}. To
  this end, in~\eqref{eq:p5} we treat the second term from the
  left-hand-side as follows
  \begin{equation*}
   \int_{0}^{t}\int_{\Omega}\nabla\un\nabla
    u\,dx d\tau=\int_{0}^{t}\int_{\Omega}|\nabla
    u|^{2}\,dx d\tau+\int_{0}^{t}\int_{\Omega}\nabla \ue\nabla
    u\,dx d\tau. 
  \end{equation*}
 We arrive now at (cf.~\eqref{eq:p5})
  \begin{equation} 
    \label{eq:p5bis}
    \begin{aligned}
      \frac{\|u(t)\|^{2}}{2}+\nu\int_{0}^{t}\|\nabla u\|^{2} d\tau
      -\int_{0}^{t}\int_{\Omega}(u\cdot\nabla)\,\ue u\,dx d\tau
      +\nu  \int_{0}^{t}\int_{\Gamma}\un(\nabla n)^T u\,dS d\tau
      \\
      \leq -\nu\int_{0}^{t}\int_{\Omega}\nabla\ue\nabla u\,dx d\tau+\frac{\|u(0)\|^2}{2}
    \end{aligned}
  \end{equation}
  We handle the third term from the left-hand-side exactly as before
  and, by integrating by parts the term from the right-hand-side (by
  using the second identity from Lemma~\ref{lem:gammalap}), we get
  \begin{equation*}
    \begin{aligned}
      -&\nu\int_{0}^{t}\int_{\Omega}\nabla\ue\nabla
      u\,dx d\tau=
      \\
      &\nu\int_{0}^{t}\int_{\Gamma}(\ome\times n)\,
        u\,dS d\tau
      +\nu\int_{0}^{t}\int_{\Gamma}\ue\cdot(\nabla n)^T\cdot
        u\,dS d\tau+\nu\int_{0}^{t}\int_{\Omega}\Delta\ue
        u\,dx d\tau.
    \end{aligned}
  \end{equation*}
 Then, we get the following inequality
\begin{equation*}
   \begin{aligned}
  \frac{\|u(t)\|^{2}}{2}+\nu\int_{0}^{t}\|\nabla u\|^{2}
  d\tau&\leq   \frac{\|u(0)\|^{2}}{2}-\nu\int_{0}^{t}\int_{\Gamma}u\, (\nabla n)^T u\,dS
  d\tau
  \\
  &-\int_{0}^{t}\int_{\Omega}(u\cdot\nabla)\,\ue u\,dx
  d\tau+\nu\int_{0}^{t}\int_{\Omega}\Delta\ue u\,dx d\tau
  \\
  &-\nu\int_{0}^{t}\int_{\Gamma}(\ome\times n)\, u\,dS d\tau.
\end{aligned}
\end{equation*}
We estimate the absolute value of the space integral from the
right-hand-side by using Schwartz inequality, trace inequalities, and the
regularity of $\ue$ as follows:
  \begin{align*}
    &   \nu\left|\int_{\Gamma}(\ome\times n) u\,dS\right|\leq
    C\nu\|u\|^{\frac{1}{2}}\|\nabla    u\|^{\frac{1}{2}}
   \leq C\,\nu^{\frac{3}{2}}+C\,\|u\|^{2}+\frac{\nu}{2}\|\nabla u\|^{2},
    \\
    \\
    & \nu\left|\int_{\Gamma} u\cdot( \nabla n)^T\cdot
      u\,dS\right|\leq C \,\nu\|u\|_{\Gamma}^{2}\leq C\,\nu
    \|u\|\|\nabla u\| \leq
    C\,    \nu\|u\|^{2}+\frac{\nu}{2}\|\nabla u\|^{2},
    \\
    \\
    &\nu\left|\int_{\Omega}\Delta \ue u\,dx\right|\leq
    C\big(\|u\|^{2}+\nu^2\big).
\end{align*}  
Then, by using also the energy inequality, we obtain the following
differential inequality (cf. with~\eqref{eq:fundamental1})
  \begin{equation*}
    \|u(t)\|^{2}+\nu\int_{0}^{t}\|\nabla u(\tau)\|^{2}d\tau\leq\|u(0)\|^2+
    C\left[\int_{0}^{t}\|u(\tau)\|^{2}d\tau+\nu^{2}+\nu^{\frac{3}{2}}\right].
  \end{equation*}
  By using Gronwall-Lemma we have that
  \begin{equation*}
    \|\un-\ue\|^2_{L^{\infty}(0,T;L^{2}(\Omega))}=\mathcal{O}(\nu^{\frac{3}{2}}) \qquad\text{and}\qquad
    \|\nabla\un-\nabla\ue\|^2_{L^{2}(0,T;L^{2}(\Omega))}=\mathcal{O}(\nu^{\frac{1}{2}}),
  \end{equation*}
ending the proof.
\end{proof}

We want now to show better convergence, and this happens if the
initial datum belongs to a particular sub-class. In particular, we use the same
observation made in~\cite{BS2010b,XX2011} to show strong convergence up to
second order derivatives.  We prove now the main result of the paper.
\begin{theorem}
  \label{thm:2}
  Let $\Omega$ be a bounded smooth open set in $\R^3$, and let
  $u_0^E\in H^3(\Omega)$, be a divergence-free vector field tangential to
  the boundary, and such that
  \begin{equation}
    \label{eq:zeroid}
    \omega_{0}^E(x)=0\qquad\qquad \forall\,x\in\Gamma.
  \end{equation}
  Let $\ue\in C([0,T];H^3(\Omega))$ be the unique solution of the Euler
  equations~\eqref{eq:Euler}, with initial datum $u_0^E$ and
  defined in some interval $[0,T]$. 
  Let $\un$ be a weak solution of the Navier-Stokes
  equations~\eqref{eq:NS-Navier-vero} with a divergence free and
  tangential to the boundary initial datum $u_0^n\in L^2(\Omega)$ such that
  \begin{equation*}
    \|u^\nu_0-u_0^E\|=\mathcal{O}(\nu).
  \end{equation*} Then,
  \begin{equation*}
    \sup_{t\in[0,T]}\|\un-\ue\|^2_{L^{\infty}(0,T;L^{2}(\Omega))}=\mathcal{O}(\nu^2),
    \quad\text{and}\quad
    \|\nabla\un-\nabla\ue\|^2_{L^{2}(0,T;L^{2}(\Omega))}=\mathcal{O}(\nu). 
  \end{equation*}
\end{theorem}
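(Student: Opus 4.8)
The plan is to re-run the argument of Theorem~\ref{thm:1} up to the inequality~\eqref{eq:p5bis}, the only — but decisive — change being that the boundary term
\[
\nu\int_{0}^{t}\int_{\Gamma}(\ome\times n)\cdot u\,dS\,d\tau,
\]
which in the proof of Theorem~\ref{thm:1} was precisely what prevented the rate from reaching $\nu^{2}$ and forced $\nu^{3/2}$, now \emph{vanishes identically} under hypothesis~\eqref{eq:zeroid}. Granting this for the moment, one proceeds as there: starting from~\eqref{eq:p5bis}, integrate by parts the right-hand side term $-\nu\int_{0}^{t}\int_{\Omega}\nabla\ue\nabla u\,dx\,d\tau$ by the second identity of Lemma~\ref{lem:gammalap}, discard the vanishing term $\nu\int_{0}^{t}\int_{\Gamma}(\ome\times n)\cdot u\,dS\,d\tau=0$, and estimate the three remaining terms exactly as in Theorem~\ref{thm:1}: $\nu|\int_{\Gamma}u\,(\nabla n)^{T}u\,dS|\le C\nu\|u\|_{\Gamma}^{2}\le C\nu\|u\|\,\|\nabla u\|\le C\nu\|u\|^{2}+\tfrac{\nu}{4}\|\nabla u\|^{2}$ (trace theorem together with the Poincar\'e inequality for fields tangent to $\Gamma$, see~\cite{KY2009}), $|\int_{\Omega}(u\cdot\nabla)\,\ue\,u\,dx|\le C\|u\|^{2}$ (since $\ue\in C([0,T];W^{1,\infty}(\Omega))$), and $\nu|\int_{\Omega}\Delta\ue\,u\,dx|\le C(\|u\|^{2}+\nu^{2})$ (since $\ue\in C([0,T];H^{2}(\Omega))$). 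This yields
\[
\|u(t)\|^{2}+\nu\int_{0}^{t}\|\nabla u(\tau)\|^{2}\,d\tau\le\|u(0)\|^{2}+C\Big[\int_{0}^{t}\|u(\tau)\|^{2}\,d\tau+\nu^{2}\Big],
\]
and Gronwall's lemma, combined with $\|u(0)\|=\|u_{0}^{\nu}-u_{0}^{E}\|=\mathcal{O}(\nu)$, gives exactly $\|\un-\ue\|^{2}_{L^{\infty}(0,T;L^{2}(\Omega))}=\mathcal{O}(\nu^{2})$ and $\|\nabla\un-\nabla\ue\|^{2}_{L^{2}(0,T;L^{2}(\Omega))}=\mathcal{O}(\nu)$.

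It remains to prove the claim that $\ome(t,x)=0$ for all $x\in\Gamma$ and all $t\in[0,T]$. Here I would use the transport structure of the three-dimensional vorticity equation. Since $\ue\in C([0,T];H^{3}(\Omega))\hookrightarrow C([0,T];C^{1}(\overline{\Omega}))$, the velocity is Lipschitz in space, so the associated flow map $X(t,\cdot)$ is well defined and of class $C^{1}$ on $\overline{\Omega}$, and the condition $\ue\cdot n=0$ on $\Gamma$ forces $X(t,\Gamma)=\Gamma$ for every $t\in[0,T]$. From the vorticity equation $\partial_{t}\ome+(\ue\cdot\nabla)\,\ome=(\ome\cdot\nabla)\,\ue$ one reads that along each characteristic the vector $t\mapsto\ome(t,X(t,x))$ solves the \emph{linear} ODE $\frac{d}{dt}\ome=(\nabla\ue)\,\ome$; hence if $x\in\Gamma$ then, by~\eqref{eq:zeroid}, $\ome(0,x)=0$ and therefore $\ome(t,X(t,x))=0$ for all $t$. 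Since $X(t,\Gamma)=\Gamma$, this means $\ome(t)\equiv0$ on $\Gamma$, the trace being meaningful because $\ome\in C([0,T];H^{2}(\Omega))$. (This is the observation already exploited in~\cite{BS2010b,XX2011}.)

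As in Theorem~\ref{thm:1}, since $\un$ is only a weak solution one cannot test directly with $u=\un-\ue$: all the manipulations above must first be written as integral identities, obtained by using the smooth field $\ue$ as a legitimate test function in the weak formulation of Definition~\ref{def:weak-solution}, by multiplying the Euler equations~\eqref{eq:Euler} by $\un$ and by $\ue$, and by combining the four resulting identities with the energy inequality~\eqref{eq:Energy-NS-NS}; this is done verbatim as in the lines leading to~\eqref{eq:p5bis} and needs no new idea. Accordingly, I expect the only genuine obstacle to be the boundary vanishing of the Euler vorticity — more precisely, making the characteristics argument fully rigorous at the $C([0,T];H^{3})$ level (well-posedness and $C^{1}$ regularity of the flow, invariance of $\Gamma$, continuity of the trace of $\ome$) — after which the energy estimate is a routine repetition of the proof of Theorem~\ref{thm:1} with the critical boundary term set to zero.
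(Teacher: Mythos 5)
Your proposal is correct and follows essentially the same route as the paper: the persistence of $\ome=0$ on $\Gamma$ is obtained from the transport/stretching structure of the vorticity equation along boundary-preserving path-lines (the paper phrases this via the representation formula $\ome(X(\alpha,t),t)=\nabla_\alpha X(\alpha,t)\,\ome(\alpha,0)$, which is just the solved form of your linear ODE along characteristics), and the energy argument is exactly the one of Theorem~\ref{thm:1} starting from~\eqref{eq:p5bis} with the term $\nu\int_0^t\int_\Gamma(\ome\times n)\cdot u\,dS\,d\tau$ removed. No gaps; the rigorization via the weak formulation and the remaining estimates are handled as you indicate.
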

A critical point in the proof is that of ``\textit{having solution to the Euler
  equations with  vanishing tangential component of the
  vorticity, as the Navier-Stokes equations,}'' that is $\ome\times n=0$ on
$\Gamma\times[0,T]$: In this way one can better estimate the term
$-\int_\Omega\Delta \ue\,u\,dx$ involved in the previous calculations.\par
In general the boundary conditions for the vorticity cannot be enforced
for the Euler equations. In addition, also if the initial datum is
such that $(\ome_0\times n)_{|\Gamma}=0$ this is not enough to have the same
boundary behavior for all positive times. As observed
in~\cite{BeiC2010a}, by using the vorticity equation,
\begin{equation*}
  \ome_t+(\ue\cdot\nabla)\,\ome=(\ome\cdot\nabla)\,\ue,
\end{equation*}
by taking the exterior product with the normal unit vector on
$\Gamma$, and finally by using that $\ome\times n=0$ implies that
$\ome_t\times n=0$ on $\Gamma$, one obtains that an
extra-compatibility condition, \textit{generically false}, should be
satisfied by the initial velocity $\ue_0$.  In particular, this
implies that the Navier's type condition does not \textit{persist} for
positive time and hence excludes the chance of a vanishing-viscosity
limit in topologies such that the vorticity $\ome$ has Sobolev traces
at the boundary. On the other hand, by using the fact that for the
Euler equations the vorticity is transported by the velocity $\ue$ and
stretched by $\nabla \ue$, one can employ the well-known
representation formula for classical solutions
\begin{equation}
  \label{eq:mb}
  \ome(X(\alpha,t),t)=\nabla_\alpha X(\alpha,t)\,\ome(\alpha,0),
\end{equation}
where the path-lines $X:\ \Omega\times [0,T]\to\Omega\subset\R^3$
solve the Cauchy problem
\begin{equation*}
  \begin{cases}
    &\frac{d}{dt}X(\alpha,t)=\ue(X(\alpha,t),t),
    \\
    &X(\alpha,0)=\alpha,
  \end{cases}
\end{equation*}
for $t\in[0,T]$ and for $\alpha\in \Omega$. Since $\ue\cdot n =0$ on
the boundary, the path-lines starting on the boundary remain on the
boundary for all positive times. The fundamental effect for our
studies is the following: Let be given $\overline{\alpha}\in \Gamma$,
then $X(\overline{\alpha},t)\in \Gamma$ for all $t\in [0,T]$ and
consequently
\begin{equation*}
  \ome(X(\overline{\alpha},t),t)\times n =\big[\nabla_\alpha
  X(\overline{\alpha},t)\ome(\overline{\alpha},0)\big]\times
   n ,\qquad \forall\,(\overline{\alpha},t)\in\Gamma\times[0,T]. 
\end{equation*}
Being \textit{generically} the matrix $\nabla_\alpha X(\overline{\alpha},t)$ not a multiple of the
identity, this implies that \textit{generically}
$\ome(X(\overline{\alpha},t),t)\times n \not=0$. In particular
$\ome\times n $ may become non-zero, as soon as $\nabla_a X$ induces a
rotation along any axis not parallel with the normal unit vector
passing through $\overline{\alpha}$.\par
On the other hand, this observation makes clear that it is possible to
have another type of \textit{persistence} for the vorticity, by
suitably restricting the class of initial data. In fact, the same
representation formula with the path-lines shows also the following
result.
\begin{lemma}
  Let $\ue\in C([0,T];H^3(\Omega))$ be the unique solution of the
  Euler equations~\eqref{eq:Euler}. If $\omega_0^E(x)=0$
  \underline{for all} $x\in \Gamma$, then
  \begin{equation*}
    \ome(x,t)=0\qquad\text{\underline{for all} couples }(x,t)\in \Gamma\times[0,T].
  \end{equation*}
\end{lemma}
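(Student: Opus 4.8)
The plan is to use the representation formula \eqref{eq:mb} directly. The key observation is that if $\overline{\alpha}\in\Gamma$, then since $\ue\cdot n=0$ on $\Gamma$ the path-line $X(\overline{\alpha},t)$ stays on $\Gamma$ for all $t\in[0,T]$; this is exactly the fact already highlighted in the text preceding the lemma. Applying \eqref{eq:mb} at such a boundary point gives
\[
  \ome(X(\overline{\alpha},t),t)=\nabla_\alpha X(\overline{\alpha},t)\,\ome(\overline{\alpha},0)
  =\nabla_\alpha X(\overline{\alpha},t)\,\omega_0^E(\overline{\alpha}).
\]
Under the hypothesis $\omega_0^E(\overline{\alpha})=0$, the right-hand side is the matrix $\nabla_\alpha X(\overline{\alpha},t)$ applied to the zero vector, hence it vanishes. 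Therefore $\ome(X(\overline{\alpha},t),t)=0$.

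To conclude I would argue that every point $x\in\Gamma$ is of the form $X(\overline{\alpha},t)$ for some $\overline{\alpha}\in\Gamma$: indeed, the flow map $\alpha\mapsto X(\alpha,t)$ is, for each fixed $t$, a diffeomorphism of $\overline{\Omega}$ onto itself (this is part of the classical well-posedness theory in $C([0,T];H^3(\Omega))$, e.g.\ Ebin--Marsden/Temam), and it maps $\Gamma$ onto $\Gamma$ because path-lines starting on the boundary remain on the boundary and, by the same token applied to $-\ue$ or to the inverse flow, path-lines starting in $\Omega$ cannot reach $\Gamma$. Hence $X(\cdot,t)|_\Gamma:\Gamma\to\Gamma$ is onto, so for any $(x,t)\in\Gamma\times[0,T]$ we may write $x=X(\overline{\alpha},t)$ with $\overline{\alpha}=X(x,t)^{-1}\in\Gamma$ (equivalently, solve the ODE backward in time from $x$), and the computation above yields $\ome(x,t)=0$.

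A minor technical point, rather than a genuine obstacle, is the justification of the Lagrangian representation \eqref{eq:mb} and of the regularity of the flow: for $\ue\in C([0,T];H^3(\Omega))\hookrightarrow C([0,T];C^{1,\gamma}(\overline{\Omega}))$ the velocity field is Lipschitz in space uniformly in $t$, so the Cauchy problem for $X$ has a unique $C^1$ solution, $\alpha\mapsto X(\alpha,t)$ is a $C^1$ diffeomorphism, and differentiating the vorticity equation $\ome_t+(\ue\cdot\nabla)\ome=(\ome\cdot\nabla)\ue$ along characteristics gives $\tfrac{d}{dt}\big[\ome(X(\alpha,t),t)\big]=\nabla\ue(X(\alpha,t),t)\,\ome(X(\alpha,t),t)$, whose solution is \eqref{eq:mb}. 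All of this is standard and available from the references already cited, so the proof of the lemma is essentially immediate once \eqref{eq:mb} is in hand; the only thing to be careful about is that the vanishing of $\omega_0^E$ is assumed on \emph{all} of $\Gamma$, which is precisely what makes the argument close — partial vanishing would not survive the action of $\nabla_\alpha X$.
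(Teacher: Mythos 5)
Your argument is correct and is exactly the one the paper intends: the Cauchy representation formula \eqref{eq:mb} combined with the fact that, since $\ue\cdot n=0$ on $\Gamma$, path-lines starting on the boundary remain on the boundary, so the vanishing of $\omega_0^E$ on \emph{all} of $\Gamma$ is transported to all of $\Gamma\times[0,T]$ via surjectivity of the boundary flow map. Your added remarks on the regularity of the flow ($H^3\hookrightarrow C^{1,\gamma}$, hence a $C^1$ flow and invertibility by solving the ODE backward) merely make explicit what the paper leaves implicit, so no further comment is needed.
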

\begin{remark}
  The class of solutions with vanishing-vorticity at the boundary we
  will employ is in some sense optimal, since in~\cite{BeiC2010b} it
  is shown that if $\omega_0\cdot n\not=0$, then there exists a time
  $\overline{T}>0$ such that the $\ome\times n\not=0$ for $0<t \leq
  \overline{T}$. Moreover this class of initial data in non-empty,
  since smooth and divergence-free functions with compact support
  satisfy the assumptions.
\end{remark}
\begin{remark}
  \label{rem:feequalzero}
  In the case of the Euler equations with a non-zero external force
  $f^E$ the same approach shows that a formula similar
  to~\eqref{eq:mb} holds true:
  \begin{equation*}
    \ome(X(\alpha,t),t)={\nabla_\alpha X(\alpha,t)}\,\ome(\alpha,0)+\int_0^t\curl
    f^E(X(\alpha,\sigma),\sigma)\,d\sigma, 
  \end{equation*}
  hence a sufficient condition to have persistence of the vanishing
  vorticity at the boundary is $\omega_0^E=\curl f^E=0$ on
  $\Gamma$.
\end{remark}

This observation on the persistence of the boundary values for the
vorticity points out that probably the ``$\omega$-based'' boundary
conditions of Navier's-type are much better behaved (in the context of
vanishing-viscosity) than the classical ``$\mathcal{D}(u)$-based''
Navier's ones. The Navier's conditions as in~\eqref{eq:NS-Navier-vero}
involve the symmetric part of the gradient. One can recall that the
symmetric part of the gradient $\mathcal{D}(\ue):=\frac{[\nabla
  \ue+(\nabla \ue)^T]}{2}$ has the following evolution equation
\begin{equation*}
  \frac{D \mathcal{D}(\ue)}{Dt}+\mathcal{D}(\ue)^2+{O}^2(\ue)=-H\pi^E,
\end{equation*}
where as usual $\frac{D }{Dt}$ is the derivative along path-lines,
${O}(\ue):=\frac{[\nabla \ue-(\nabla \ue)^T]}{2}$ is essentially the vorticity
(since ${O}(\ue)h=\frac{1}{2}\omega^E\times h$ for each vector $h$) and
$H\pi^E$ is the Hessian of the pressure.  It seems that (contrary to the
results for $\omega^E$ or equivalently ${O}$) the evolution
of the matrix $\mathcal{D}(\ue)$ cannot be handled, since the pressure does not
disappear and one cannot employ directly a transport/stretching
argument. Hence, the problem related with vanishing-viscosity under
the Navier's boundary conditions seems to require different tools, even
if the friction parameter $\beta$ vanishes. 

\smallskip

\begin{proof}[Proof of Theorem~\ref{thm:2}]
  We can improve a little bit  the rate of convergence of
  Theorem~\ref{thm:1}, by assuming that the initial datum is such that
  the vorticity vanishes at the boundary. In fact, we have seen that
  by assuming~\eqref{eq:zeroid} we have $ \omega^{E}(x,t)=0$ for all
  $(x,t)\in\Gamma\times[0,T]$.  Writing again the same energy
  estimates we employed before to get~\eqref{eq:p5bis}, and by using the identity
  \begin{equation*}
    - \nu\int_{0}^{t}\int_{\Omega}\nabla\ue\nabla    u\,dx d\tau
    =\nu\int_{0}^{t}\int_{\Gamma}\ue\cdot(\nabla n)^T\cdot u\,dS d\tau
    +\nu\int_{0}^{t}\int_{\Omega}\Delta\ue u\,dx d\tau,
 \end{equation*}
 (notice that now the boundary integral $\int_{\Gamma}(\ome\times n)
 u\,dS$ vanishes) we can re-do the same calculations as before
 starting from~\eqref{eq:p5bis} to obtain now
\begin{equation*}
  \|u(t)\|^{2}+\nu\int_{0}^{t}\|\nabla u\|^{2}\,d\tau\leq
  \|u(0)\|^{2}+    C\Big[\int_{0}^{t}\|u(\tau)\|^{2}\,d\tau+\nu^{2}\Big].
  \end{equation*}
  Finally,  by Gronwall's inequality we get 
  \begin{equation*}
    \sup_{t\in[0,T]}\|\un-\ue\|^2_{L^{\infty}(0,T;L^{2}(\Omega))}=\mathcal{O}(\nu^2),\qquad\text{and}\qquad 
    \|\nabla\un-\nabla\ue\|^2_{L^{2}(0,T;L^{2}(\Omega))}=\mathcal{O}(\nu),
  \end{equation*}
concluding the proof.
\end{proof}
\section{Vanishing  viscosity for the 3D~Boussinesq equations}
\label{sec:Boussinesq}
In this section we pass to consider the Boussinesq
system~\eqref{eq:NSB-Navier} and a first step concerns the existence
of weak solutions.  In particular, since the problem is without
diffusion in the equation for the density, a proper notion of weak
solution is needed. Observe that the unknown $\rho^{\nu}$ entering the
equations is not the density, but represents the small variations of
density from the constant state, hence there is no need to assume
$\rho^{\nu}\geq0$.
\begin{definition}
  \label{def:weak-solutionB}
  We say that the pair $\un\in L^{\infty}(0,T;L^{2}(\Omega))\cap
  L^{2}(0,T;H^{1}(\Omega))$ and $\rho^{\nu}\in
  L^{\infty}(0,T;L^{2}(\Omega))$ is a weak solution of the Boussinesq
  equations~\eqref{eq:NSB-Navier} if the two following conditions
  hold:
  \begin{equation*}
    \begin{aligned}
      \int_{0}^{T}\int_{\Omega}\big(
      -\vn\phi_{t}-\rho^\nu\psi_t+\nu\nabla \vn\nabla \phi
      -(\vn\cdot\nabla)\, \phi\,\vn
      -(\vn\cdot\nabla)\, \psi\,\rho^\nu\big)\,dx d\tau
      \\
      +\nu\int_{0}^{T}\int_{\Gamma}
      \vn\cdot (\nabla n)^T\cdot \phi\,dS d\tau
      =\int_{\Omega}v_{0}^\nu\phi(0)+\rho^\nu_0\psi(0)\,dx,
    \end{aligned}
  \end{equation*}
  for all vector-fields $\phi\in
  C^{\infty}_0([0,T[\times\overline{\Omega})$ such that
  $\nabla\cdot\phi=0$ in $\Omega\times[0,T[$, and $\phi\cdot n=0$ on
  $\Gamma\times[0,T[$ and scalars $\psi\in
  C^{\infty}_0([0,T[\times\overline{\Omega})$
  %
(In particular the resulting solution satisfies the equations
in the sense of $\mathcal{D}'((0,T)\times \Omega)$); the following
energy estimate
  \begin{equation}
    \label{eq:Energy-NSB-NSB}
    \begin{aligned}
      \frac{\|\vn(t)\|^{2}+\|\rho^{\nu}(t)\|^{2}}{2}+\nu\int_{0}^{t}\|\nabla\vn\|^{2}\,d\tau
      &+\nu\int_{0}^{t}\int_{\Gamma}\vn\cdot(\nabla n)^T\cdot\vn\,dS d\tau\\
      &\leq\frac{\|v_0^\nu\|^{2}+\|\rho_0^{\nu}\|^{2}}{2}.
    \end{aligned}
  \end{equation}
is satisfied for all $t\in[0,T]$.
\end{definition}

With this definition we have the following result.
\begin{theorem}
  \label{thm:existence_weak_solutionsB}
  Let be given any positive $T>0$, $v_0^\nu\in L^2(\Omega)$, and 
  $\rho_0^\nu\in L^2(\Omega)$, such that $v_0^\nu$ is divergence-free
  and such that $v_0^\nu\cdot n=0$ on $\Gamma$. Then, there exists at
  least a weak solution of the Boussinesq
  equations~\eqref{eq:NSB-Navier}  on $[0,T]$.
\end{theorem}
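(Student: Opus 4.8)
\medskip

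\noindent\textbf{Proof proposal for Theorem~\ref{thm:existence_weak_solutionsB}.}
The plan is to adapt the construction behind Theorem~\ref{thm:existence_weak_solutions}, combining a Galerkin scheme for the velocity with a parabolic regularization of the (otherwise purely transported) density. Fix $\nu>0$, let $\{w_k\}_{k\in\N}$ be the Galerkin basis of divergence-free vector fields tangential to $\Gamma$ adapted to the vorticity-based Navier operator used for~\eqref{eq:NS-Navier} (see the construction recalled after Theorem~\ref{thm:existence_weak_solutions} and~\cite[\S~6]{XX2007}), and for $m\in\N$ and $\epsilon>0$ seek $\vn_m=\sum_{k=1}^m c_k^{m,\epsilon}(t)\,w_k$ together with a scalar $\rho^\nu_{m,\epsilon}$ solving the $m$-dimensional Galerkin projection of the velocity equation with buoyancy forcing $-\rho^\nu_{m,\epsilon}e_3$, coupled to the regularized transport equation $\partial_t\rho^\nu_{m,\epsilon}+(\vn_m\cdot\nabla)\rho^\nu_{m,\epsilon}=\epsilon\Delta\rho^\nu_{m,\epsilon}$ with the homogeneous Neumann condition on $\Gamma$. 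As $\vn_m$ is a finite combination of smooth fields, this is a system of ODEs coupled to a linear parabolic equation with smooth drift, so a standard fixed-point argument yields a local-in-time smooth solution.

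\medskip

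\noindent The next step is an energy estimate uniform in $m$ and $\epsilon$. Testing the velocity equation with $\vn_m$ and using Lemma~\ref{lem:gammalap} together with $\curl\vn_m\times n=0$ reproduces the left-hand side of~\eqref{eq:Energy-NSB-NSB} up to the buoyancy term $-\int_0^t\int_\Omega\rho^\nu_{m,\epsilon}\,(\vn_m)_3\,dx\,d\tau$; testing the regularized density equation with $\rho^\nu_{m,\epsilon}$ gives $\tfrac12\|\rho^\nu_{m,\epsilon}(t)\|^2+\epsilon\int_0^t\|\nabla\rho^\nu_{m,\epsilon}\|^2\,d\tau=\tfrac12\|\rho_0^\nu\|^2$, the drift contribution vanishing since $\vn_m$ is divergence-free and tangential. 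Adding the two identities and absorbing the buoyancy term by $\tfrac12(\|\rho^\nu_{m,\epsilon}\|^2+\|\vn_m\|^2)$, Gronwall's lemma yields bounds, uniform in $m$ and $\epsilon$, for $\vn_m$ in $L^\infty(0,T;L^2(\Omega))\cap L^2(0,T;H^1(\Omega))$, for $\rho^\nu_{m,\epsilon}$ in $L^\infty(0,T;L^2(\Omega))$, and for $\sqrt{\epsilon}\,\nabla\rho^\nu_{m,\epsilon}$ in $L^2(0,T;L^2(\Omega))$; in particular the approximations are global on $[0,T]$. (The buoyancy coupling makes the constant on the right of~\eqref{eq:Energy-NSB-NSB} really of the form $\tfrac12(\|v_0^\nu\|^2+\|\rho_0^\nu\|^2)e^{CT}$, unless one carries along the cross term $\int_0^t\!\int_\Omega\rho^\nu\,(\vn)_3\,dx\,d\tau$.) From the equations one also reads off uniform bounds for $\partial_t\vn_m$ in some $L^{4/3}(0,T;H^{-s}(\Omega))$---the forcing $-\rho^\nu_{m,\epsilon}e_3$, bounded in $L^\infty(0,T;L^2(\Omega))$, being harmless here---and for $\partial_t\rho^\nu_{m,\epsilon}$ in a negative-order Sobolev space.

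\medskip

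\noindent Finally one passes to the limit, letting $m\to\infty$ and $\epsilon=\epsilon_m\to0$ simultaneously. The Aubin--Lions lemma applied to $\vn_m$ gives a subsequence converging strongly in $L^2((0,T)\times\Omega)$, and a.e., which suffices to pass to the limit in $(\vn_m\cdot\nabla)\phi\,\vn_m$ and in the transport term $(\vn_m\cdot\nabla)\psi\,\rho^\nu_{m,\epsilon}=\vn_m\rho^\nu_{m,\epsilon}\cdot\nabla\psi$, the latter being a product of a strongly convergent factor with a weakly-$*$ convergent one ($\rho^\nu_{m,\epsilon}\overset{*}{\rightharpoonup}\rho^\nu$ in $L^\infty(0,T;L^2(\Omega))$) tested against the bounded field $\nabla\psi$. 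The viscous and boundary terms pass by weak convergence in $L^2(0,T;H^1(\Omega))$ and compactness of the trace operator, while $\epsilon_m\int\nabla\rho^\nu_{m,\epsilon}\cdot\nabla\psi\to0$ because $\|\epsilon_m\nabla\rho^\nu_{m,\epsilon}\|_{L^2L^2}=\sqrt{\epsilon_m}\,\big(\epsilon_m\|\nabla\rho^\nu_{m,\epsilon}\|_{L^2L^2}^2\big)^{1/2}\to0$. The time-derivative bounds give $\vn\in C_w([0,T];L^2(\Omega))$ and $\rho^\nu\in C_w([0,T];L^2(\Omega))$, so the initial data are attained, and the energy estimate survives the limit by weak/weak-$*$ lower semicontinuity of the $L^2$-norms.

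\medskip

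\noindent I expect the main obstacle to be precisely the density equation: the diffusion disappears in the limit, so only weak-$*$ compactness of $\rho^\nu$ is available, and the identification of the nonlinear flux $\vn\rho^\nu$ rests entirely on the \emph{strong} $L^2((0,T)\times\Omega)$-compactness of the velocity; one therefore has to check that the buoyancy coupling does not spoil the Aubin--Lions argument for $\vn_m$ (it does not, being a lower-order $L^\infty(0,T;L^2)$ perturbation). A secondary point is that the $\epsilon\Delta\rho$ regularization is what makes the coupled approximate system manifestly solvable; one could instead transport $\rho$ along the Galerkin flow by characteristics, but the parabolic regularization keeps every nonlinear term well defined throughout.
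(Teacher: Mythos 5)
Your construction is essentially the paper's own proof: the same $\epsilon\Delta\rho^{\nu,\epsilon}$ regularization with homogeneous Neumann condition, uniform energy bounds, strong $L^2((0,T)\times\Omega)$ compactness of the velocity paired with mere weak convergence of the density to pass to the limit in the transport term, and the vanishing of $\epsilon\int\nabla\rho^{\nu,\epsilon}\cdot\nabla\psi$ via the uniform bound on $\sqrt{\epsilon}\,\nabla\rho^{\nu,\epsilon}$; you merely make explicit the Galerkin layer and Aubin--Lions details that the paper dispatches as ``standard techniques.'' Your parenthetical remark that the buoyancy coupling forces either the cross term $\int_0^t\!\int_\Omega\rho^\nu(\vn)_3\,dx\,d\tau$ or a Gronwall factor into the energy estimate is a fair and correct observation about a detail the paper's statement of~\eqref{eq:Energy-NSB-NSB} glosses over.
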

\begin{proof}
  By simplifying a procedure typical of compressible flows, as in
  P.-L.~Lions~\cite{Lio1996} and Feireisl~\cite[\S~4]{Fei2004}, we
  first consider the following approximate system, with $\epsilon>0$
  \begin{equation*}
    \begin{aligned}
      \partial_{t}\vne-\nu\Delta\vne+(\vne\cdot\nabla)\,\vne+\nabla
      p^{\nu,\epsilon}&=-\rho^{\nu,\epsilon} e_3&\qquad\textrm{ in
      }\Omega\times(0,T],
      \\
      \partial_{t}\rho^{\nu,\epsilon}+(\vne\cdot\nabla)\,\rho^{\nu,\epsilon}&
      =\epsilon\Delta\rho^{\nu,\epsilon}& 
      \qquad\textrm{ in }\Omega\times(0,T],
      \\
      \nabla\cdot\vne&=0&\qquad\textrm{ in }\Omega\times(0,T],
      \\
      \vne\cdot n&=0&\qquad\textrm{ on }\Gamma\times(0,T],
      \\
      \curl\vne\times n&=0&\qquad \textrm{ on }\Gamma\times(0,T],
      \\
      n\cdot \nabla\rho^{\nu,\epsilon}&=0&\qquad \textrm{ on
      }\Gamma\times(0,T],
      \\
      \vne(0,x)&=v_{0}^{\nu}&\qquad \textrm{ in }\Omega,
      \\
      \rho^{\nu,\epsilon}(0,x)&=\rho_{0}^{\nu}&\qquad \textrm{ in
      }\Omega.
    \end{aligned}
  \end{equation*} 
  By standard techniques one can show that the above system has at
  least a weak solution
  \begin{equation*}
    \vne,\rho^{\nu,\epsilon}\in L^\infty(0,T;L^2(\Omega))\cap L^2(0,T;H^1(\Omega)),
  \end{equation*}
  satisfying the energy inequality
  \begin{equation*}
    \begin{aligned}
      \frac{\|\vne(t)\|^{2}+\|\rho^{\nu,\epsilon}(t)\|^{2}}{2}&+\nu\int_{0}^{t}\|\nabla\vne\|^{2}\,d\tau
      + \epsilon\int_{0}^{t}\|\nabla\rho^{\nu,\epsilon}\|^{2}\,d\tau
      \\
      &+\nu\int_{0}^{t}\int_{\Gamma}\vne\cdot(\nabla n)^T\cdot\vne\,dS
      d\tau\leq \frac{\|v_0^\nu\|^{2}+\|\rho_0^{\nu}\|^{2}}{2}.
    \end{aligned}
  \end{equation*}
  As $\epsilon$ goes to zero, (and by standard results of compactness)
  one can find sub-sequences such that
  \begin{equation*}
    \begin{aligned}
      &\vne\overset{*}{\rightharpoonup} \vn \qquad \text{and}\qquad
      \rho^{\nu,\epsilon}\overset{*}{\rightharpoonup} \rho^{\nu}\qquad
      &\text{in }L^\infty(0,T;L^2(\Omega)),
      \\
      &\vne\rightharpoonup \vn \qquad&\text{in } L^2(0,T;H^1(\Omega)),
      \\
      &\rho^{\nu,\epsilon}\rightharpoonup \rho^\epsilon
            \qquad \text{and}\qquad
       \vne\to \vn \qquad &\text{in } L^2(0,T;L^2(\Omega)),
    \end{aligned}
  \end{equation*}
  for some $\vn\in L^\infty(0,T;L^2(\Omega))\cap L^2(0,T;H^1(\Omega))$
  and $\rho^{\nu}\in L^\infty(0,T;L^2(\Omega))$. By standard arguments
  it turns out that $(\vn,\rho^\nu)$ is a weak solution
  to~\eqref{eq:NSB-Navier}. In particular, being $\vne$ strongly
  convergent and  $\rho^{\nu,\epsilon}$ weakly convergent in
  $L^2(0,T;L^2(\Omega))$, one has
  \begin{equation*}
    \int_0^T\int_\Omega(\vne\cdot\nabla \psi)\,\rho^{\nu,\epsilon}\,dx d\tau\to
    \int_0^T\int_\Omega(\vn\cdot\nabla \psi)\,\rho^{\nu}\,dx d\tau, 
  \end{equation*}
  and also
  \begin{equation*}
    \epsilon\int_0^T\int_\Omega\nabla\rho^{\nu,\epsilon}\nabla\psi\,dx d\tau\to 0,
  \end{equation*}
  due to the uniform bound for
  $\epsilon\int_0^T\int_\Omega|\nabla\rho^{\nu,\epsilon}|^2dx d\tau$.
  This implies that the $(\vn,\rho^\nu)$ is a weak solution in the
  sense of Definition~\ref{def:weak-solutionB} and in particular that
  energy inequality~\eqref{eq:Energy-NSB-NSB} is satisfied.
\end{proof}

From the existence result we prove now the last result of this paper.
\begin{theorem}
  \label{thm:3}
  Let $\Omega$ be a bounded smooth open set in $\R^3$, let
  $v_0^E\in H^3(\Omega)$, be a divergence-free vector field tangential to
  the boundary, let also $\rho_0^E\in H^3(\Omega)$. Let us suppose
  that the initial data satisfy the following conditions at the boundary:
  \begin{equation}
    \label{eq:zeroidB}
    \begin{aligned}
      \omega_{0}^E(x)=0\qquad\qquad \forall\,x\in\Gamma,
      \\
      \nabla\rho_{0}^E(x)=0\qquad\qquad \forall\,x\in\Gamma.
    \end{aligned}
 \end{equation}
 Let $\ue,\rho^E\in C([0,T];H^3(\Omega))$ be the unique solution of
 the Euler Boussinesq equations~\eqref{eq:EulerB}, with initial datum
 $(v_0^E,\rho^E_0)$ and defined in some interval $[0,T]$.

  Let $(\vn,\rho^\nu)$ be a weak solution of the Boussinesq
  equations~\eqref{eq:NSB-Navier} with a divergence free and
  tangential to the boundary initial datum $u_0^n\in L^2(\Omega)$ and
  with $\rho_0^\nu\in L^2(\Omega)$ such that 
  \begin{equation*}
    \|u^\nu_0-u_0^E\|=\mathcal{O}(\nu)\qquad\text{and}\qquad
    \|\rho^\nu_0-\rho_0^E\|=\mathcal{O}(\nu).
  \end{equation*} 
Then,
  \begin{equation*}
    \begin{aligned}
      &
      \sup_{t\in[0,T]}\|\un-\ue\|^2_{L^{\infty}(0,T;L^{2}(\Omega))}=\mathcal{O}(\nu^2),
      \qquad
      \sup_{t\in[0,T]}\|\rho^\nu-\rho^E\|^2_{L^{\infty}(0,T;L^{2}(\Omega))}=\mathcal{O}(\nu^2),
      \\
      &\qquad\text{and}\qquad
      \\
      &
      \|\nabla\un-\nabla\ue\|^2_{L^{2}(0,T;L^{2}(\Omega))}=\mathcal{O}(\nu).
   \end{aligned}
 \end{equation*}
\end{theorem}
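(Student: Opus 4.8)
The plan is to run, for the velocity field, exactly the energy argument of Theorem~\ref{thm:2}, and to couple it with a companion $L^2$ estimate for the density, the two equations interacting only through lower order terms. The first step I would carry out is to check that the hypothesis $\nabla\rho_0^E=0$ on $\Gamma$ is precisely what guarantees the persistence of the vanishing of the vorticity at the boundary for the \emph{forced} system~\eqref{eq:EulerB}. Since $\rho^E$ is transported along the Euler path-lines, differentiating $\rho^E(X(\alpha,t),t)=\rho_0^E(\alpha)$ in $\alpha$ gives $\nabla_x\rho^E(X(\alpha,t),t)=(\nabla_\alpha X(\alpha,t))^{-T}\nabla_\alpha\rho_0^E(\alpha)$; as path-lines issued from $\Gamma$ remain on $\Gamma$, the assumption $\nabla\rho_0^E=0$ on $\Gamma$ forces $\nabla\rho^E(\cdot,t)=0$ on $\Gamma$ for every $t\in[0,T]$, hence $\curl(\rho^E e_3)=\nabla\rho^E\times e_3=0$ on $\Gamma\times[0,T]$. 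Then Remark~\ref{rem:feequalzero} applied with $f^E=-\rho^E e_3$, together with $\ome_0=0$ on $\Gamma$, yields $\ome=0$ on $\Gamma\times[0,T]$, in particular $\ome\times n=0$ there; this is the structural fact that made Theorem~\ref{thm:2} work.

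Next I would set $u:=\vn-\ve$ and $\sigma:=\rho^\nu-\rho^E$, and for $u$ repeat word for word the computation producing~\eqref{eq:p5bis} and the estimates following it in the proof of Theorem~\ref{thm:2}. The only new contribution comes from the buoyancy force: subtracting the momentum equations of~\eqref{eq:NSB-Navier} and~\eqref{eq:EulerB} and pairing with $u$ produces the extra term $-\int_0^t\int_\Omega\sigma\,e_3\cdot u\,dx\,d\tau$, which Young's inequality bounds by $C\int_0^t(\|u\|^2+\|\sigma\|^2)\,d\tau$. Because $\ome\times n=0$ on $\Gamma\times[0,T]$ the boundary integral $\nu\int_\Gamma(\ome\times n)\,u\,dS$ vanishes, and the remaining boundary and Laplacian terms are handled exactly as before (trace theorem, Poincar\'e inequality for tangential fields, and $\ve\in C([0,T];H^3(\Omega))\hookrightarrow C([0,T];W^{1,\infty}(\Omega))$), leading to
\[
  \|u(t)\|^2+\nu\int_0^t\|\nabla u\|^2\,d\tau\leq\|u(0)\|^2+C\Big[\int_0^t\big(\|u(\tau)\|^2+\|\sigma(\tau)\|^2\big)\,d\tau+\nu^2\Big].
\]
As in Theorem~\ref{thm:1}, this is to be understood through the integral formulation of Definition~\ref{def:weak-solutionB} and the energy inequality~\eqref{eq:Energy-NSB-NSB}, which makes the otherwise formal computations rigorous.

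For the density I would subtract the transport equations to get $\partial_t\sigma+(\vn\cdot\nabla)\sigma+(u\cdot\nabla)\rho^E=0$, and pair with $\sigma$. Using $\nabla\cdot\vn=0$ and $\vn\cdot n=0$ the transport term drops, using $\nabla\cdot u=0$ and $u\cdot n=0$ one has $\int_\Omega(u\cdot\nabla)\rho^E\,\rho^E\,dx=0$, so that
\[
  \frac{1}{2}\frac{d}{dt}\|\sigma\|^2=-\int_\Omega(u\cdot\nabla)\rho^E\,\sigma\,dx\leq\|\nabla\rho^E\|_{\infty}\,\|u\|\,\|\sigma\|\leq C\big(\|u\|^2+\|\sigma\|^2\big),
\]
again using $\rho^E\in C([0,T];H^3(\Omega))\hookrightarrow C([0,T];W^{1,\infty}(\Omega))$. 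To make this rigorous for the merely weak $\rho^\nu$ I would argue as for $\un$ in Theorem~\ref{thm:1}: use $\rho^E$ as an admissible (smooth) test function in the weak formulation of the $\rho^\nu$ equation, use $\rho^\nu$ as a test function in the $\rho^E$ equation, use $\rho^E$ in the $\rho^E$ equation for the conservation $\|\rho^E(t)\|=\|\rho_0^E\|$, and combine these with $\|\rho^\nu(t)\|\leq\|\rho_0^\nu\|$ (which follows from the $\epsilon$-regularization used in the proof of Theorem~\ref{thm:existence_weak_solutionsB} and lower semicontinuity of the norm); the cross terms then reorganize precisely into $\int_0^t\int_\Omega(u\cdot\nabla)\rho^E\,\sigma\,dx\,d\tau$.

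Finally I would add the two inequalities, set $y(t):=\|u(t)\|^2+\|\sigma(t)\|^2$, and obtain
\[
  y(t)+\nu\int_0^t\|\nabla u\|^2\,d\tau\leq y(0)+C\Big[\int_0^t y(\tau)\,d\tau+\nu^2\Big],
\]
with $y(0)=\|u_0^\nu-u_0^E\|^2+\|\rho_0^\nu-\rho_0^E\|^2=\mathcal{O}(\nu^2)$ by assumption. Gronwall's lemma then gives $\sup_{t\in[0,T]}y(t)=\mathcal{O}(\nu^2)$ and $\nu\int_0^T\|\nabla u\|^2\,d\tau=\mathcal{O}(\nu^2)$, i.e. $\|\nabla\vn-\nabla\ve\|^2_{L^2(0,T;L^2(\Omega))}=\mathcal{O}(\nu)$, which is the assertion. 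I expect the only genuine obstacle to be this last rigor issue for the density equation, namely justifying the $L^2$ bound on $\sigma$ for the weak solution $\rho^\nu$; the velocity part is a verbatim transcription of the proof of Theorem~\ref{thm:2}, the buoyancy coupling being a harmless lower order term absorbed by the Gronwall argument, and the persistence of $\ome\times n=0$ for the forced Euler--Boussinesq system being exactly the content of the first step.
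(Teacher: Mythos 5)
Your proposal is correct and follows essentially the same route as the paper: the combined $L^2$ energy estimate for $v=\vn-\ve$ and $\sigma=\rho^\nu-\rho^E$, with the key point being exactly the persistence argument (vorticity representation formula with forcing $-\rho^E e_3$, plus transport of $\nabla\rho^E$ along path-lines staying on $\Gamma$) that makes the boundary term $\nu\int_\Gamma(\curl\ve\times n)\cdot v\,dS$ vanish under~\eqref{eq:zeroidB}. The only organizational difference is that you run the velocity and density estimates separately and add them, whereas the paper derives one combined inequality from the weak formulation of Definition~\ref{def:weak-solutionB}; in doing so you also explicitly track the buoyancy coupling $-\int\sigma\,e_3\cdot u$ and the resulting $\int_0^t\|\sigma\|^2$ in the Gronwall term, which the paper's displayed inequalities leave implicit.
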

\begin{proof}
  The proof is based on the same techniques employed before. We
  multiply \eqref{eq:NSB-Navier} by $(v^E,\rho^E)$ and with suitable
  integration by parts we get
  \begin{equation}
    \label{eq:energy-NSB-EB}
    \begin{aligned}
      &\int_{\Omega}\vn(t)\ue(t)+\rho^\nu(t)\rho^E(t)\,dx+
      \int_{0}^{t}\int_{\Omega}\Big(\nu\nabla \vn\nabla \ve
      +(\vn\cdot\nabla)\, \vn \ve
      \\
      &+(\vn\cdot\nabla)\, \rho^{\nu} \rho^E\Big)\,dx
      d\tau-\int_{0}^{t}\int_{\Omega} \rho^\nu\rho_t^E+\vn\ve_{t}\,dx
      d\tau +\nu\int_{0}^{t}\int_{\Gamma}\vn(\nabla n)^T \ve\,dSd\tau
      \\
      &=\int_{\Omega}v_{0}^E v_0^\nu+\rho_{0}^E \rho_0^\nu\,dx.
    \end{aligned}
  \end{equation}
We then obtain from the equation for $(\ve,\rho^E)$
  \begin{equation}
    \label{eq:energy-EB-NSB}
\int_{0}^{t}\int_{\Omega}\big(\ve_t
    \vn+ \rho^E_t
    \rho^{\nu}+(\ve\cdot\nabla)\, \ve \vn+(\ve\cdot\nabla)\, \rho^E \vn\big)\,dx d\tau
    =\int_{\Omega}v_{0}^E v_0^\nu+\rho_{0}^E \rho_0^\nu\,dx.
  \end{equation}
 Next, by multiplying the Euler Boussinesq equations by $\ue$ and by the usual integrations
  by parts we get the energy conservation
  \begin{equation}
    \label{eq:energy-EB-EB}
    \frac{\|\ve(t)\|^2+\|\rho^E(t)\|^2}{2}=\frac{\|v_0^E\|^2+\|\rho_0^E\|^2}{2}.
  \end{equation}
Then, by adding together~\eqref{eq:Energy-NSB-NSB}-\eqref{eq:energy-EB-EB}
and subtracting~\eqref{eq:energy-NSB-EB}-\eqref{eq:energy-EB-NSB}, we
get
  \begin{equation*} 
    \begin{aligned}
 &     \frac{\|v(t)\|^{2}+\|\rho(t)\|^{2}}{2}+
      \nu\int_{0}^{t}\int_{\Omega}\nabla\vn\nabla v\,dx d\tau
      +\int_{0}^{t}\int_{\Omega}(v\cdot\nabla)\,\ve v\,dx d\tau 
      \\
      &
      \qquad   -\int_{0}^{t}\int_{\Omega}(v\cdot\nabla)\,\rho^E \rho\,dx d\tau +\nu
      \int_{0}^{t}\int_{\Gamma}\vn(\nabla n)^T v\,dS d\tau
            \leq      \frac{\|v(0)\|+\|\rho(0)\|^{2}}{2},
    \end{aligned}
    \end{equation*}
where
\begin{equation*}
  v:=\vn-\ve\qquad\text{and}\qquad \rho:=\rho^\nu-\rho^E.
\end{equation*}
With the same manipulations employed in the previous section we get
\begin{equation*}
  \begin{aligned}
    &\frac{\|v(t)\|^{2}+\|\rho(t)\|^2}{2}+\nu\int_{0}^{t}\|\nabla
    v\|^{2} d\tau\leq
    \frac{\|v(0)\|^{2}+\|\rho(0)\|^{2}}{2}-\nu\int_{0}^{t}\int_{\Gamma}v\,
    (\nabla n)^T v\,dS d\tau
  \\
  &\qquad-\int_{0}^{t}\int_{\Omega}(v\cdot\nabla)\,\ve v+(v\cdot\nabla)\,\rho^E \rho\,dx
  d\tau-\nu\int_{0}^{t}\int_{\Omega}\Delta\ve v\,dx d\tau
\\
&\qquad+\nu\int_{0}^{t}\int_{\Gamma}(\curl\ve\times n)\, v\,dS d\tau.
\end{aligned}
\end{equation*}
We then estimate most of the terms as before with
  \begin{align*}
    & \left|\int_{\Omega}(v\cdot\nabla)\,\ve\,
      v+(v\cdot\nabla)\,\rho^E \rho\,dx\right|\leq
    C\big(\|u(t)\|^2+\|\rho(t)\|^2\big),
    \\
    \\
    & \nu\left|\int_{\Gamma} v\cdot( \nabla n)^T\cdot v\,dS\right|
\leq
    C\,    \nu\|v\|^{2}+\frac{\nu}{2}\|\nabla v\|^{2},
    \\
    \\
    &\nu\left|\int_{\Omega}\Delta \ve v\,dx\right|\leq
    C\big(\|v\|^{2}+\nu^2\big),
\end{align*}  
where we used that $\ve\in C([0,T];H^3(\Omega))$. 

To handle the last integral involving $\curl\ve$ on $\Gamma$, we
observe that the equation for the vorticity for the Euler-Boussinesq
system implies, along path-lines,
\begin{equation*}
    \curl \ve(X(\alpha,t),t)=\nabla_\alpha X(\alpha,t)\curl \ve(\alpha,0)-\int_0^t\curl(
    \rho^E e_3)(X(\alpha,\sigma),\sigma)\,d\sigma,
  \end{equation*}
  hence now $\curl v^E_0(x)=0$ on $\Gamma$ is not enough to have
  $\curl v^E=0$ on the boundary for all positive times and a control
  also on $\curl(\rho^E e_3)$ at the boundary is needed. We then
  observe that $\rho$ is transported by the velocity $\ve$ (it solves the
  equation of continuity), hence
\begin{equation*}
  \rho^E(X(\alpha,t),t)=\rho^E_0(\alpha).
\end{equation*}
Consequently, we get the following evolution equation for the
gradient of $\rho$:
\begin{equation*}
\nabla\rho^E X(\alpha,t),t)\,\nabla_\alpha X(\alpha,t)=\nabla \rho_0^E(\alpha),
\end{equation*}
hence 
\begin{equation*}
 \nabla \rho^E(X(\alpha,t),t)=\nabla \rho_0^E(\alpha)\big[\nabla_\alpha X(\alpha,t)\big]^{-1}.
\end{equation*}
Since the matrix $\nabla_\alpha X$ is non-singular and since
path-lines starting at the boundary remain at the boundary, it follows
that a sufficient condition to have $\curl(\rho^E e_3)(x,t)=0$ for all
$x\in \Gamma$ and for all $t\in[0,T]$ is that of
asking~\eqref{eq:zeroidB}. Under the above assumptions the term
$\int_{\Gamma}(\curl\ve\times n)\, v\,dS$ vanishes identically, hence
we arrive at the inequality
\begin{equation*}
  \|u(t)\|^{2}+\|\rho(t)\|^{2}+\nu\int_{0}^{t}\|\nabla u\|^{2}\,d\tau\leq
  \|u(0)\|^{2}+\|\rho(0)\|^{2}+
  C\Big[\int_{0}^{t}\|u(\tau)\|^{2}\,d\tau+\nu^{2}\Big],
  \end{equation*}
from which we have the thesis by applying the Gronwall lemma.
\end{proof}
\def\ocirc#1{\ifmmode\setbox0=\hbox{$#1$}\dimen0=\ht0 \advance\dimen0
  by1pt\rlap{\hbox to\wd0{\hss\raise\dimen0
  \hbox{\hskip.2em$\scriptscriptstyle\circ$}\hss}}#1\else {\accent"17 #1}\fi}
  \def\polhk#1{\setbox0=\hbox{#1}{\ooalign{\hidewidth
  \lower1.5ex\hbox{`}\hidewidth\crcr\unhbox0}}} \def\cprime{$'$}

\end{document}